\numberwithin{equation}{section}
\newtheorem{thm}{Theorem}[section]
\newtheorem{prop}[thm]{Proposition}
\newtheorem{cor}[thm]{Corollary}
\newtheorem*{cor*}{Corollary}
\newtheorem{lema}[thm]{Lemma}
\newtheorem*{lema*}{Lemma}
\newtheorem*{hyp}{Hypotheses}
\theoremstyle{definition}
\newtheorem*{prob*}{Problem}
\newtheorem{Def}[thm]{Definition}
\newtheorem{example}[thm]{Example}
\newtheorem{obs}[thm]{Remark}
\newtheorem*{obs*}{Remark}
\newtheorem*{thm*}{Theorem}
\newtheorem*{prop*}{Proposition}
\newcommand{\PI}[2]{\left\langle \,#1 , #2\, \right\rangle}
\newcommand{\PIB}[2]{\Big\langle \,#1 , #2\, \Big\rangle}
\newcommand{\set}[1]{\left\{ \,#1\, \right\}}
\newcommand{\x}{\times}
\newcommand{\CC}{\mathbb{C}}
\newcommand{\RR}{\mathbb{R}}
\newcommand{\NN}{\mathbb{N}}
\newcommand{\St}{\mathcal{S}}
\newcommand{\HH}{\mathcal{H}}
\newcommand{\PP}{\mathcal{P}}
\newcommand{\mc}[1]{\mathcal{#1}}
\newcommand{\noi}{\noindent}
\newcommand{\la}{\lambda}
\newcommand{\eqnum}{\refstepcounter{equation}\textup{\tagform@{\theequation}}}
\DeclareMathOperator{\real}{Re}
\begin{document}

\title{Krein-\v{S}mul'jan Theorem Revisited}

\author[S.~Gonzalez Zerbo]{Santiago Gonzalez Zerbo}
\address{Instituto Argentino de Matem\'{a}\-tica ``Alberto P. Calder\'{o}n'' (CONICET), Saavedra 15 (1083) Buenos Aires, Argentina}
\email{sgzerbo@fi.uba.ar} 

\author[A.~Maestripieri]{Alejandra Maestripieri}
\address{Instituto Argentino de Matem\'{a}\-tica ``Alberto P. Calder\'{o}n'' (CONICET), Saavedra 15 (1083) Buenos Aires, Argentina}
\email{amaestri@fi.uba.ar}
 
\author[F.~Mart\'{\i}nez Per\'{\i}a]{Francisco Mart\'{\i}nez Per\'{\i}a}
\address{Centro de Matem\'{a}tica de La Plata (CMaLP) -- FCE-UNLP, La Plata, Argentina \\
and Instituto Argentino de Matem\'{a}tica ``Alberto P. Calder\'{o}n'' (CONICET), Saavedra 15 (1083) Buenos Aires, Argentina}
\email{francisco@mate.unlp.edu.ar}

\subjclass[2020]{Primary 47A63; Secondary 47B02, 15A39, 90C20} 
\keywords{Linear operator inequalities, quadratically constrained quadratic programming} 

\begin{abstract}
We present a generalization of Krein-\v{S}mul'jan theorem which involves several operators. Given bounded selfadjoint operators $A,B_1,\ldots,B_m$ acting on a Hilbert space $\HH$, we provide sufficient conditions to determine whether there are $\la_1,\ldots,\la_m\in \RR$ such that $A + \sum_{i=1}^m \la_i B_i$ is a positive semidefinite operator.
\end{abstract}

\maketitle

\section{Introduction}

Along this paper $(\HH,\PI{\cdot}{\cdot})$ denotes a complex Hilbert space, and $\mc{L}(\HH)$ stands for the algebra of bounded linear operators in $\HH$.
An operator $A\in \mc{L}(\HH)$ is {\it positive semidefinite} if $\PI{Ax}{x}\geq 0$ for all $x\in\HH$; and it is {\it positive definite} if there exists
$\alpha>0$ such that $\PI{Ax}{x}\geq \alpha\|x\|^2$ for every $x\in\HH$. 

Given bounded selfadjoint operators $A,B_1,\ldots,B_m$ acting on $\HH$, the aim of this work is to determine whether there are $\la_1,\ldots,\la_m\in \RR$ such that the operator $A + \sum_{i=1}^m \la_i B_i$ is positive semidefinite. If $\geq$ denotes L\"owner's partial order of selfadjoint operators, the problem can be restated as whether the inequality
\begin{equation}\label{LMI}
A + \sum_{i=1}^m \la_i B_i \geq 0
\end{equation}
is feasible. If $\HH$ is finite dimensional this is known as a \emph{linear matrix inequality} (LMI), an area which has been thoroughly studied since the 1940's for its applications in System and Control theory, see \cite{Boyd} and the references therein.

Another reason that makes this problem interesting is that it is closely related to the existence of minimizers for quadratically constrained quadratic programming (QCQP) problems. 
A QCQP problem can be posed as:
\begin{alignat*}{3}
& \text{minimize}   \quad && f(x)&&=\PI{Ax}{x}+2\real\PI{y_0}{x} + \alpha_0\\
& \text{subject to}   \quad && g_i(x)&&=\PI{B_i x}{x}+2\real\PI{y_i}{x}\leq\alpha_i,\qquad i=1,\ldots,m,
\end{alignat*}
where the optimization variable $x$ varies in $\HH$, and the data consists of bounded selfadjoint operators
$A,B_1,\ldots,B_m$ acting in $\HH$, vectors $y_i\in\HH$ and scalars $\alpha_i\in\RR$, for $i=0,1,...,m$. 
Note that the Hessian of such a quadratic function is constant. In particular, the Hessian of $f,g_1,\ldots,g_m$ are given by the selfadjoint operators $A,B_1,\ldots,B_m$, respectively. Hence, if $x_0$ is a minimizer of the above problem then there exist $\la_1,\ldots,\la_m\in\RR$ such that \eqref{LMI} holds, see e.g. \cite{Luenberger,Marsden}.

For a finite dimensional space, studies on the simplest case (i.e. $m=1$) can be traced back to works of Finsler \cite{Finsler}, Hestenes \cite{Hestenes51}, and Calabi \cite{Calabi}. But the result characterizing the feasibility of $A + \la B\geq 0$ in an arbitrary Hilbert space is known as the Krein-Smul'jan theorem \cite{KSruss,KS}, see also \cite{Kuhne64, Kuhne68}. Given a selfadjoint operator $B\in\mc{L}(\HH)$ we say that $B$ is indefinite if it is not semidefinite i.e. there exist $x_+,x_-\in\HH$ such that $\PI{Bx_+}{x_+}>0$ and $\PI{Bx_-}{x_-}<0$.

\begin{thm}\label{teo_krein_smuljan_azizov}
If $B\in\mc{L}(\HH)$ is indefinite, then there exists $\la\in\RR$ such that \\ $A + \la B\geq 0$ if and only if
\[
\PI{Ax}{x}\geq 0 \qquad \text{whenever}\qquad \PI{Bx}{x}=0. 
\]
In this case, 
\[
\frac{\PI{Ay}{y}}{\PI{By}{y}}\leq \frac{\PI{Az}{z}}{\PI{Bz}{z}}
\]
 for every $y, z\in \HH$ such that $\PI{By}{y}<0$ and $\PI{Bz}{z}>0$. Also, if
\begin{equation}\label{eq:def_lambdas}
\la_-:=-\inf_{\PI{Bx}{x}>0}\frac{\PI{Ax}{x}}{\PI{Bx}{x}}\quad\quad\text{and}\quad\quad \la_+:=-\sup_{\PI{Bx}{x}<0}\frac{\PI{Ax}{x}}{\PI{Bx}{x}},
\end{equation}
then $\lambda_-\leq\lambda_+$ and $\{\la\in\RR: \ A + \la B\geq 0 \}=[\lambda_-,\lambda_+]$.
\end{thm}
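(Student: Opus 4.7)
The plan is to establish the intermediate quotient inequality first, and to deduce both the existence equivalence and the description of the feasible set from it. The ``only if'' direction of the first equivalence is immediate: if $A+\la B\geq 0$ and $\PI{Bx}{x}=0$, then $\PI{Ax}{x}=\PI{(A+\la B)x}{x}\geq 0$. The opposite implication and the remaining assertions will be obtained simultaneously.

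The technical heart of the argument is to prove, under the hypothesis $\PI{Ax}{x}\geq 0$ whenever $\PI{Bx}{x}=0$, the inequality
\[
\frac{\PI{Ay}{y}}{\PI{By}{y}}\leq \frac{\PI{Az}{z}}{\PI{Bz}{z}}
\]
for all $y,z\in\HH$ with $\PI{By}{y}<0$ and $\PI{Bz}{z}>0$. My strategy is to apply the hypothesis to a pair of symmetric test vectors. Given such $y$ and $z$, I consider $w_\pm:=t\,y\pm e^{i\theta}z$, so that
\[
\PI{Bw_\pm}{w_\pm}=t^2\PI{By}{y}\pm 2t\,\real(e^{-i\theta}\PI{By}{z})+\PI{Bz}{z}.
\]
I choose $\theta\in\RR$ so that $\real(e^{-i\theta}\PI{By}{z})=0$ (killing the middle term for both signs simultaneously) and $t^2=-\PI{Bz}{z}/\PI{By}{y}>0$ (so the diagonal part vanishes as well). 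Then $\PI{Bw_\pm}{w_\pm}=0$, the hypothesis gives $\PI{Aw_\pm}{w_\pm}\geq 0$, and adding the two inequalities cancels the analogous $A$-cross-term to yield $2t^2\PI{Ay}{y}+2\PI{Az}{z}\geq 0$. Dividing by $2\PI{Bz}{z}>0$ and substituting the value of $t^2$ produces exactly the displayed inequality.

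With this two-variable estimate in hand, taking the supremum over $y$ with $\PI{By}{y}<0$ and the infimum over $z$ with $\PI{Bz}{z}>0$ yields $-\la_+\leq -\la_-$, i.e.\ $\la_-\leq\la_+$; indefiniteness of $B$ guarantees that both extremal sets are nonempty, so each quotient on one side provides a bound for the extremum on the other and both $\la_\pm$ are finite. To check that $A+\la B\geq 0$ for every $\la\in[\la_-,\la_+]$, I would split on the sign of $\PI{Bx}{x}$: if $\PI{Bx}{x}>0$ the definition of $\la_-$ gives $\PI{Ax}{x}\geq -\la_-\PI{Bx}{x}\geq -\la\PI{Bx}{x}$; if $\PI{Bx}{x}<0$ the definition of $\la_+$ yields the analogous bound (the sign flip coming from division by a negative); and if $\PI{Bx}{x}=0$ the hypothesis itself applies. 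The reverse inclusion is symmetric: evaluating $\PI{(A+\la B)x}{x}\geq 0$ at vectors of each $B$-sign and taking the appropriate extremum recovers $\la\in[\la_-,\la_+]$. In particular $\{\la:A+\la B\geq 0\}=[\la_-,\la_+]$ is nonempty, which closes the existence direction of the first equivalence.

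The main obstacle is the $w_\pm$ construction: the $\pm$-symmetry combined with the phase parameter $e^{i\theta}$ supplies exactly the two real degrees of freedom needed to annihilate $B$ simultaneously at two sign-related vectors, after which summation cleans up the unwanted $A$-cross-term. Without this symmetrization one is left with an uncontrolled cross term $\real(e^{-i\theta}\PI{Ay}{z})$ that cannot be handled from the hypothesis alone.
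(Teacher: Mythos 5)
Your argument is correct: the symmetrized test vectors $w_\pm=t\,y\pm e^{i\theta}z$ with $\theta$ chosen to annihilate $\real\big(e^{-i\theta}\PI{By}{z}\big)$ and $t^2=-\PI{Bz}{z}/\PI{By}{y}$ do make both $w_\pm$ $B$-neutral, and summing the two resulting inequalities $\PI{Aw_\pm}{w_\pm}\geq 0$ removes the $A$-cross term and yields the quotient inequality, from which the finiteness of $\la_\pm$, the inequality $\la_-\leq\la_+$, and the identification of the feasible set as $[\la_-,\la_+]$ all follow exactly as you describe. Note that the paper itself offers no proof of this theorem (it is quoted from Krein--\v{S}mul'jan and K\"uhne), but your method is essentially the same device the authors deploy for their own generalizations in Lemma \ref{prop_simetria} and Theorem \ref{prop_krein_smuljan_2}: there they use the curves $\gamma_\pm(t)=t\,y\pm(1-t)e^{i\theta}z$, locate the neutral parameters $t_\pm$ by an intermediate value argument, and add the two resulting relations to cancel the cross term; in your one-operator setting the known signs of $\PI{By}{y}$ and $\PI{Bz}{z}$ let you solve for the neutral parameter explicitly instead, which is a legitimate simplification and changes nothing essential.
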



To the best of our knowledge, there is no such a result for an inequality which involves several variables like \eqref{LMI}. Even in the finite dimensional setting, there are only a few results.
Among them, it is worthwhile mentioning the works by Dines \cite{Dines42,Dines43} and Hestenes and McShane \cite{Hestenes}. 

\medskip

The paper is organized as follows. Section \ref{Indef} starts with a discussion about weakly indefinite sets of selfadjoint operators. We show that this notion is only sufficient to prove a generalization of Krein-Smul'jan theorem in the case of pairs $\{B_1,B_2\}$. For finite sets $\{B_1,\ldots,B_m\}$ with $m>2$ it is necessary to impose some extra condition, named strongly indefiniteness. After discussing what strongly indefiniteness means, in Theorem \ref{prop_ks_strongly} we state a generalization of Krein-Smul'jan theorem. Finally, in Section \ref{Hes-McShane} we give a sufficient condition on $\{B_1,\ldots,B_m\}$ to be strongly indefinite, which is inspired by the results of Hestenes and McShane in \cite{Hestenes}.

\section{Weakly indefinite sets of selfadjoint operators}\label{Indef}

We start with a definition which is mainly motivated by \cite{Dines43,Hestenes}.

\begin{Def} 
A set of selfadjoint operators $\{B_1,\ldots,B_m\}$ is {\it weakly indefinite} if
\[
\sum_{i=1}^m\mu_iB_i\text{ is indefinite for every $(\mu_1,\ldots,\mu_m)\in\RR^m\setminus\{0\}$}.
\]
\end{Def}

If $\{B_1,B_2,\ldots,B_m\}$ is weakly indefinite, then any subset of it is also weakly indefinite. In particular, $B_i$ is indefinite for every $i=1,2,\ldots,m$. Also, if $\{B_1,\ldots,B_m\}$ is weakly indefinite then it is a linearly independent set.

\medskip

Given a selfadjoint operator $B\in\mc{L}(\HH)$, denote by $Q(B)$ the set of neutral vectors for the quadratic form induced by $B$, $Q(B)=\{x\in\HH: \ \PI{Bx}{x}=0\}$. 
Given a set of selfadjoint operators $\{B_1,\ldots,B_m\}$ for brevity we write $Q_i=Q(B_i)$ for each $i=1,\ldots,m$. Also, we consider the sets of vectors which are positive (negative) with respect to the quadratic form induced by $B_i$:
\begin{align*}
\mc{P}_i^{+}=\set{x\in\HH\,:\,\PI{B_ix}{x}>0} \quad \text{and} \quad  
\mc{P}_i^-=\set{x\in\HH\,:\,\PI{B_ix}{x}<0}.    
\end{align*}

It is well known that $B$ is indefinite if and only if $Q(B)\setminus N(B)\neq \{0\}$, i.e. if
there exists $x\in\HH$ such that
\[
\PI{Bx}{x}=0\qquad\text{and}\qquad Bx\neq 0.
\]
The next result presents a sufficient condition to guarantee the weakly indefiniteness of $\{B_1,\ldots,B_m\}$.

\begin{prop} 
Given selfadjoint operators $B_1,\ldots,B_m\in\mc{L}(\HH)$, if there exists $x\in\HH$ such that
\[
x\in \textstyle{\bigcap_{j=1}^m}Q_j\quad\text{and}\quad
\{B_1x,\ldots, B_mx\} \ \text{is linearly independent in $\HH$}
\]
then $\{B_1,\ldots,B_m\}$ is weakly indefinite.
\end{prop}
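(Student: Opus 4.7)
The plan is to exploit the characterization recalled just before the statement: a selfadjoint operator $B\in\mc{L}(\HH)$ is indefinite if and only if there exists $y\in\HH$ with $\PI{By}{y}=0$ and $By\neq 0$. So it suffices to show that the given vector $x$ itself witnesses indefiniteness for every nontrivial real linear combination of $B_1,\ldots,B_m$.

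Concretely, I would fix an arbitrary $(\mu_1,\ldots,\mu_m)\in\RR^m\setminus\{0\}$ and set $B=\sum_{i=1}^m\mu_iB_i$, which is selfadjoint. First, since $x\in\bigcap_{j=1}^m Q_j$, linearity of the quadratic form gives
\[
\PI{Bx}{x}=\sum_{i=1}^m\mu_i\PI{B_ix}{x}=0,
\]
so $x\in Q(B)$. Second, using that $Bx=\sum_{i=1}^m\mu_i B_ix$ is a linear combination of the vectors $B_1x,\ldots,B_mx$ with not all coefficients zero, the hypothesis that $\{B_1x,\ldots,B_mx\}$ is linearly independent in $\HH$ yields $Bx\neq 0$ (note that, as a by-product, $x\neq 0$). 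Therefore $x\in Q(B)\setminus N(B)$, and $B$ is indefinite. Since $(\mu_1,\ldots,\mu_m)$ was arbitrary, $\{B_1,\ldots,B_m\}$ is weakly indefinite.

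There is essentially no obstacle here: the argument is a direct two-line verification once one invokes the neutral/kernel characterization of indefiniteness. The only point that deserves a brief comment is the translation from the abstract definition of indefiniteness (existence of vectors of both signs for $\PI{B\cdot}{\cdot}$) to the neutral-non-kernel formulation, which is standard and is already mentioned in the text immediately preceding the statement.
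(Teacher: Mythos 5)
Your proof is correct and follows exactly the paper's argument: both verify that the fixed vector $x$ lies in $Q\big(\sum_i\mu_iB_i\big)\setminus N\big(\sum_i\mu_iB_i\big)$ for every nonzero coefficient vector, using linear independence of $\{B_1x,\ldots,B_mx\}$ to get $\big(\sum_i\mu_iB_i\big)x\neq 0$, and then invoke the neutral-vector characterization of indefiniteness stated just before the proposition.
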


\begin{proof} 
It suffices to show that $Q\big(\sum_{j=1}^m\la_jB_j\big)\setminus N\big(\sum_{j=1}^m\la_jB_j\big)\neq\{0\}$ for every $(\la_1,\ldots,\la_m)\in\RR^m\setminus\{ 0\}$. Given $(\la_1,\ldots,\la_m)\in\RR^m\setminus\{ 0\}$, note that $\big(\sum_{j=1}^m\la_jB_j\big)x$ is not trivial because $\{B_1x,\ldots, B_mx\}$ is linearly independent. Then, 
\[
x\in\big(\textstyle{\bigcap_{j=1}^m}Q_j\big)\setminus N\big(\sum_{j=1}^m\la_jB_j\big)\subseteq Q\big(\sum_{j=1}^m\la_jB_j\big)\setminus N\big(\sum_{j=1}^m\la_jB_j\big),
\]
and since $(\la_1,\ldots,\la_m)$ was arbitrary the proof is complete.
\end{proof}

Given $x\in\HH$, note that $\{B_1x,\ldots, B_mx\}$ is linearly independent if and only if $x\not\in N(\sum_{j=1}^m \la_jB_j)$ for every $(\la_1,\ldots,\la_m)\in\RR^m\setminus\{0\}$.

The sufficient condition presented above is not necessary to guarantee weakly indefiniteness of a set of operators, because it imposes that $\bigcap_{i=1}^m Q_i\neq\{0\}$. In Example \ref{conj indef pero no strongly} below we present a set $\{B_1,B_2,B_3,B_4\}$ which is weakly indefinite but $\bigcap_{i=1}^4 Q_i=\{0\}$.

\begin{lema}\label{prop_simetria} 
Given two indefinite selfadjoint operators $B_1,B_2\in\mc{L}(\HH)$, 
the family $\{B_1,B_2\}$ is weakly indefinite if and only if $B_i$ is indefinite in $Q_j$ for $j\neq i$.
In this case $Q_1\cap Q_2\neq\{0\}$.
\end{lema}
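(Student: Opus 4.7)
The plan is to split the claim into three parts: the two implications of the equivalence, followed by the final statement $Q_1\cap Q_2\neq\{0\}$.

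For the forward implication I will argue by contrapositive. Suppose $B_2$ is semidefinite on $Q_1$; after possibly replacing $B_2$ by $-B_2$ (which merely changes the sign of the second coefficient in a linear combination), I may assume $\PI{B_2 x}{x}\ge 0$ for every $x\in Q_1$. Since $B_1$ is indefinite, Theorem~\ref{teo_krein_smuljan_azizov} applied with $A=B_2$ and $B=B_1$ yields $\la\in\RR$ with $B_2+\la B_1\ge 0$. Then $(\mu_1,\mu_2)=(\la,1)$ provides a non-trivial real linear combination that is semidefinite, contradicting the weakly indefiniteness of $\{B_1,B_2\}$. The symmetric argument exchanging $B_1$ and $B_2$ handles the assertion for $B_1$ on $Q_2$.

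For the backward implication I fix an arbitrary $(\mu_1,\mu_2)\in\RR^2\setminus\{0\}$ and show that $\mu_1 B_1+\mu_2 B_2$ is indefinite. If one of the coefficients vanishes, this is immediate from the (stand-alone) indefiniteness of $B_1$ and $B_2$. When both coefficients are non-zero I pick $x_+,x_-\in Q_1$ with $\PI{B_2 x_+}{x_+}>0>\PI{B_2 x_-}{x_-}$, available by hypothesis; then
\[
\PI{(\mu_1 B_1+\mu_2 B_2)x_\pm}{x_\pm}=\mu_2\PI{B_2 x_\pm}{x_\pm}
\]
exhibits both signs, so $\mu_1 B_1+\mu_2 B_2$ is indefinite.

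For the final assertion I will build a non-zero vector in $Q_1\cap Q_2$ along the straight segment $z(t)=(1-t)x_++tx_-$, $t\in[0,1]$, joining the two vectors above. Expanding and using that $B_1$ is selfadjoint, the endpoint contributions drop and one gets $\PI{B_1 z(t)}{z(t)}=2t(1-t)\,\real\PI{B_1 x_+}{x_-}$. Since $\HH$ is \emph{complex}, I will first replace $x_+$ by $e^{i\theta}x_+$ for $\theta$ chosen so that $\real\PI{B_1 x_+}{x_-}=0$; this substitution preserves both $x_+\in Q_1$ and $\PI{B_2 x_+}{x_+}>0$. With that normalization the entire segment sits inside $Q_1$, while $t\mapsto\PI{B_2 z(t)}{z(t)}$ is continuous and changes sign, hence vanishes at some $t^*\in(0,1)$. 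A short check rules out $z(t^*)=0$: equality would force $x_+$ to be a real multiple of $x_-$, making $\PI{B_2 x_+}{x_+}$ and $\PI{B_2 x_-}{x_-}$ share sign. Therefore $z(t^*)\in(Q_1\cap Q_2)\setminus\{0\}$.

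The main obstacle I anticipate is this last step, since $Q_1$ is not a subspace and connecting $x_+$ to $x_-$ inside $Q_1$ is not automatic; the phase rotation is the device that forces the \emph{straight} segment to remain in $Q_1$, and it genuinely uses the complex structure of $\HH$.
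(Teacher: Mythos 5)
Your proof is correct and follows essentially the same route as the paper's: the implication from weak indefiniteness uses Theorem~\ref{teo_krein_smuljan_azizov} exactly as in the paper, the converse evaluates a nontrivial combination on the neutral cone of one operator (your direct version versus the paper's argument by contradiction is only a cosmetic difference), and the construction of a nonzero vector in $Q_1\cap Q_2$ via the phase rotation $e^{i\theta}$ and an intermediate-value argument along the segment is the paper's argument with the roles of $B_1$ and $B_2$ interchanged. The only nitpick is the phrase ``real multiple'' in the nonvanishing step (it is $e^{i\theta}x_+$ that becomes a real multiple of $x_-$), but the sign argument goes through for any complex multiple, so nothing is lost.
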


\begin{proof}
Assume, for example, that $B_1$ is indefinite in $Q_2$ but there exists $(\la_1,\la_2)\neq(0,0)$ such that $\la_1B_1 + \la_2B_2\geq 0$. If $\la_1=0$ then $\la_2B_2\geq 0$ leading to a contradiction. If $\la_1>0$ then $B_1 + \tfrac{\la_2}{\la_1}B_2\geq 0$. In  particular $B_1\geq 0$ in $Q_2$, which is a contradiction to our assumption. If $\la_1<0$ then it is easy to see that $B_1\leq 0$ in $Q_2$, which leads to another contradiction.

Conversely, 
suppose that $\{B_1,B_2\}$
is weakly indefinite and that $B_1$ is definite in $Q_2$. If $B_1\geq0$ in $Q_2$ then, by Theorem \ref{teo_krein_smuljan_azizov}, there exists $\la\in\RR$ such that $B_1+\la B_2\geq0$,
which is a contradiction to $\{B_1,B_2\}$ being indefinite. If $B_1\leq0$ in $Q_2$,
consider $-B_1$. By symmetry, $B_2$ is indefinite in $Q_1$.

\smallskip
To see that $Q_1\cap Q_2\neq\{0\}$, take $ y \in\mc{P}_1^-\cap Q_2$ and $ z \in\mc{P}_1^+\cap Q_2$, and choose $\theta\in[0,\pi)$ such that $\real\PI{B_2 y }{e^{i\theta} z }=0$.
Consider
\[
\gamma(t)=t\, y +(1-t)e^{i\theta}\, z ,\qquad t\in[0,1].
\]
Since $ y,z \in Q_2$, for $t\in[0,1]$
\[
\PI{B_2\gamma(t)}{\gamma(t)}=t^2\PI{B_2 y }{ y }+(1-t)^2\PI{B_2 z }{ z }+2\,t\,(1-t)\real\PI{B_2 y }{e^{i\theta} z }=0.
\]
Hence, $\gamma\big([0,1]\big)\subseteq Q_2$ and the real valued function 
\[
f(t)=\PI{B_1\gamma(t)}{\gamma(t)},\qquad t\in[0,1],
\]
satisfies $f(0)=\PI{B_1 y }{ y }<0$ and $f(1)=\PI{B_1 z }{ z }>0$. Thus, there exists $t_0\in(0,1)$ such that $f(t_0)=0$.
This implies that $\gamma(t_0)\in Q_1\cap Q_2$. 
Also, $\gamma(t_0)\neq 0$ because $\{y,z\}$ is a linearly independent set.
\end{proof}


In the following  we denote by $\Omega$ the feasibility set for inequality \eqref{LMI}, i.e.  
\begin{equation}\label{feasibility set}
\Omega=\Omega\big(A,(B_i)_{i=1}^m\big):=\set{(\la_1,\la_2,\ldots,\la_m)\in\RR^m\,:\, A+\sum_{i=1}^m\la_iB_i \geq 0  }.
\end{equation}
It is easy to check  that $\Omega$ is a closed convex subset of $\RR^m$. 

The next proposition characterizes the feasibility of \eqref{LMI} for $m=2$. Its proof follows the lines of one given in \cite{DM}.

\begin{thm}\label{prop_krein_smuljan_2} 
Given selfadjoint operators $A,B_1,B_2\in\mc{L}(\HH)$, assume that
$\{B_1,B_2\}$ is weakly indefinite. Then,
\[
A\geq0\quad\text{in}\quad Q_1\cap Q_2\qquad\text{if and only if}\qquad\Omega\neq\varnothing.
\]
\end{thm}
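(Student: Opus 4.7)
The ``only if'' direction is immediate: for any $(\la_1,\la_2)\in\Omega$ and $x\in Q_1\cap Q_2$, one has $\PI{Ax}{x}=\PI{(A+\la_1 B_1+\la_2 B_2)x}{x}\geq 0$.

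For the converse, the plan is to invoke Theorem \ref{teo_krein_smuljan_azizov} twice in succession. First I would produce $\mu\in\RR$ such that $A+\mu B_2\geq 0$ on the null set $Q_1=Q(B_1)$. Once this is achieved, since $B_1$ is indefinite, Theorem \ref{teo_krein_smuljan_azizov} applied to the pair $(A+\mu B_2,B_1)$ yields $\la\in\RR$ with $A+\la B_1+\mu B_2\geq 0$, so that $(\la,\mu)\in\Omega$.

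To produce $\mu$, I would imitate the proof of Krein-\v{S}mul'jan with $\HH$ replaced by the set $Q_1$. By Lemma \ref{prop_simetria}, $B_2$ is indefinite on $Q_1$, so both $\mc{P}_2^+\cap Q_1$ and $\mc{P}_2^-\cap Q_1$ are non-empty. Set
\[
\mu^-:=-\inf_{z\in\mc{P}_2^+\cap Q_1}\frac{\PI{Az}{z}}{\PI{B_2 z}{z}},\qquad \mu^+:=-\sup_{y\in\mc{P}_2^-\cap Q_1}\frac{\PI{Ay}{y}}{\PI{B_2 y}{y}}.
\]
The key claim is that for every such $y,z$ one has $\frac{\PI{Ay}{y}}{\PI{B_2 y}{y}}\leq\frac{\PI{Az}{z}}{\PI{B_2 z}{z}}$. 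This delivers $\mu^-,\mu^+\in\RR$ and $\mu^-\leq\mu^+$ simultaneously; then for any $\mu\in[\mu^-,\mu^+]$ a three-way case split on the sign of $\PI{B_2 x}{x}$ (using $A\geq 0$ on $Q_1\cap Q_2$ when $\PI{B_2 x}{x}=0$) shows that $A+\mu B_2\geq 0$ on all of $Q_1$.

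The substantive obstacle is this ratio inequality, since the auxiliary vectors must now live in the \emph{double} null set $Q_1\cap Q_2$. I would exploit the complex structure via the pencil $w(\alpha)=y+\alpha z$ with $\alpha\in\CC$. Since $y,z\in Q_1$, the requirement $w(\alpha)\in Q_1$ collapses to $\real(\bar\alpha\PI{B_1 z}{y})=0$, which fixes $\arg\alpha$ up to addition of $\pi$. For each of the two admissible phases the further requirement $w(\alpha)\in Q_2$ becomes a quadratic equation in $|\alpha|$ with positive leading coefficient $\PI{B_2 z}{z}$ and negative constant term $\PI{B_2 y}{y}$, and hence admits a unique positive root. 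This produces two non-zero vectors $w_1,w_2\in Q_1\cap Q_2$ with $\PI{Aw_i}{w_i}\geq 0$ by hypothesis. A positive linear combination of these two inequalities, weighted by the opposite magnitudes $|\alpha|$ so that Vieta's formulas cancel the $\real(\bar\alpha\PI{Az}{y})$ cross-terms, yields exactly $\PI{Ay}{y}\PI{B_2 z}{z}\geq \PI{B_2 y}{y}\PI{Az}{z}$, which is the desired inequality after division by the negative quantity $\PI{B_2 y}{y}\PI{B_2 z}{z}$.
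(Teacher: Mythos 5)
Your proposal is correct and follows essentially the same route as the paper: after the trivial direction, you establish the ratio inequality $\frac{\PI{Ay}{y}}{\PI{B_2y}{y}}\leq\frac{\PI{Az}{z}}{\PI{B_2z}{z}}$ by producing two vectors of $Q_1\cap Q_2$ in the real plane spanned by $y$ and a phase-rotated $z$, then reduce to the classical Krein--\v{S}mul'jan theorem. The only differences are cosmetic: you swap the roles of $B_1$ and $B_2$ and obtain the two auxiliary vectors by solving the quadratic explicitly (Vieta) rather than via the intermediate value theorem and adding the two null equations, which is the same cancellation the paper performs.
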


\begin{proof}
The fact that $\Omega\neq\varnothing$ trivially implies $A\geq0$ in $Q_1\cap Q_2$. To prove the converse, assume that $A\geq0$ in $Q_1\cap Q_2$.
By Lemma \ref{prop_simetria}, $B_1$ is indefinite in $Q_2$. Hence, fixing $y\in\mc{P}_1^-\cap Q_2$, $z\in\mc{P}_1^+\cap Q_2$ and choosing $\theta\in[0,\pi)$ so that $\real\PI{B_2y}{e^{i\theta}z}=0$, consider
\[
\gamma_\pm(t)=t\, y \pm(1-t)e^{i\theta}\, z ,\qquad t\in[0,1].
\]
Note that $\gamma_\pm\big([0,1]\big)\subseteq Q_2$ and 
%
take $t_\pm\in(0,1)$ as in the proof of Lemma \ref{prop_simetria} such that $\gamma_\pm(t_\pm)\in Q_1\cap Q_2$.
Now, we have the equations
\[
a\PI{B_1 y }{ y }+\tfrac{1}{a}\PI{B_1 z }{ z }+2\real\PI{B_1 y }{e^{i\theta} z }=\tfrac{1}{t_+(1-t_+)}\PI{B_1 \gamma_+(t_+)}{\gamma_+(t_+)}=0,
\]
\[
b\PI{B_1 y }{ y }+\tfrac{1}{b}\PI{B_1 z }{ z }-2\real\PI{B_1 y }{e^{i\theta} z }=\tfrac{1}{t_-(1-t_-)}\PI{B_1 \gamma_-(t_-)}{\gamma_-(t_-)}=0.
\]
where $a:=\frac{t_+}{1-t_+}$ and $b:=\frac{t_-}{1-t_-}$ are positive. Then, adding these two we get
\[
(a+b)\PI{B_1 y }{ y }+\big(\tfrac{1}{a}+\tfrac{1}{b}\big)\PI{B_1 z }{ z }=0,
\]
or equivalently,
\begin{equation}\label{eq_igualdad_conos}
a\,b = -\frac{\PI{B_1z}{z}}{\PI{B_1y}{y}}.
\end{equation}
Now, since $\PI{A\gamma_\pm(t_\pm)}{\gamma_\pm(t_\pm)}\geq0$, in the same fashion we get that
\[
0\leq \frac{1}{a\,b}\PI{Az}{z}+\PI{Ay}{y}.
\]
Combining this with \eqref{eq_igualdad_conos} yields
\[
\frac{\PI{A y }{ y }}{\PI{B_1 y }{ y }}\leq\frac{\PI{A z }{ z }}{\PI{B_1 z }{ z }},
\]
for arbitrary $y\in\mc{P}_1^-\cap Q_2$ and $z\in\mc{P}_1^+\cap Q_2$. Therefore,
\[
\sup_{y\in\mc{P}_1^-\cap Q_2}\frac{\PI{Ay}{y}}{\PI{B_1y}{y}}\leq\inf_{z\in\mc{P}_1^+\cap Q_2}\frac{\PI{Az}{z}}{\PI{B_1z}{z}}.
\]
If $\la_1\in\RR$ is such that $-\inf_{z\in\mc{P}_1^+\cap Q_2}\frac{\PI{Az}{z}}{\PI{B_1z}{z}}\leq\la_1\leq -\sup_{y\in\mc{P}_1^-\cap Q_2}\frac{\PI{Ay}{y}}{\PI{B_1y}{y}}$ then
\[
\PI{(A+\la_1 B_1)x}{x}\geq0\quad\text{for every $x\in(\mc{P}_1^-\cap Q_2)\cup (\mc{P}_1^+\cap Q_2)$.}
\]
Considering that $\PI{Ax}{x}\geq0$ for every $x\in Q_1\cap Q_2$, we then have that
\[
\PI{(A+\la_1 B_1)x}{x}\geq0 \quad\text{for every $x\in Q_2$}.
\]
Finally, by Theorem \ref{teo_krein_smuljan_azizov} there exists $\la_2\in\RR$ such that $A+\la_1B_1+\la_2 B_2\geq0$, i.e. $(\la_1,\la_2)\in\Omega$.
\end{proof}

\begin{cor}\label{prop_simetria_2} 
Given three indefinite selfadjoint operators $B_1,B_2, B_3\in\mc{L}(\HH)$, the family $\{B_1,B_2,B_3\}$
is weakly indefinite if and only $B_j$ is indefinite in $\bigcap_{i\neq j}Q_i$ for every $j=1,2,3$.
\end{cor}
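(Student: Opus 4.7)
The plan is to prove both implications by contradiction, leveraging the two-operator results already established. The forward direction will rely on Theorem \ref{prop_krein_smuljan_2} applied in a slightly unusual way (treating one of the $B_j$ as the ``$A$'' operator), while the reverse direction will be a direct contradiction argument using vectors witnessing the indefiniteness of each $B_j$ on the intersection $\bigcap_{i\neq j}Q_i$.

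For the forward implication, I assume $\{B_1,B_2,B_3\}$ is weakly indefinite and, arguing by contradiction, suppose that some $B_j$ fails to be indefinite on $\bigcap_{i\neq j}Q_i$. By symmetry I take $j=3$, so $B_3$ is semidefinite on $Q_1\cap Q_2$; after possibly replacing $B_3$ by $-B_3$, I may assume $B_3\geq 0$ on $Q_1\cap Q_2$. Since any subset of a weakly indefinite family is weakly indefinite, $\{B_1,B_2\}$ is weakly indefinite. Then I apply Theorem \ref{prop_krein_smuljan_2} with the selfadjoint operator $B_3$ playing the role of $A$ and the pair $(B_1,B_2)$ in the role of the constraints: it provides scalars $\la_1,\la_2\in\RR$ such that $B_3+\la_1 B_1+\la_2 B_2\geq 0$. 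Since $(\la_1,\la_2,1)\neq 0$, this contradicts the weak indefiniteness of $\{B_1,B_2,B_3\}$.

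For the reverse implication, I assume each $B_j$ is indefinite on $\bigcap_{i\neq j}Q_i$ and, for contradiction, take $(\la_1,\la_2,\la_3)\in\RR^3\setminus\{0\}$ with $\la_1 B_1+\la_2 B_2+\la_3 B_3$ semidefinite; WLOG $\geq 0$ (otherwise negate). Pick an index $j$ with $\la_j\neq 0$; again by symmetry take $j=3$. Using the hypothesis, select $x_+,x_-\in Q_1\cap Q_2$ with $\PI{B_3 x_+}{x_+}>0$ and $\PI{B_3 x_-}{x_-}<0$. Then
\[
0\leq \PI{(\la_1 B_1+\la_2 B_2+\la_3 B_3)x_\pm}{x_\pm}=\la_3\PI{B_3 x_\pm}{x_\pm},
\]
which forces both $\la_3\geq 0$ and $\la_3\leq 0$, so $\la_3=0$, a contradiction.

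The main subtlety I anticipate is merely making the symmetry reductions explicit and verifying that the hypotheses of Theorem \ref{prop_krein_smuljan_2} really do apply in the forward direction; the computational content is essentially nil beyond what is already contained in that theorem and Lemma \ref{prop_simetria}. One small thing to check carefully is that, in the reverse direction, the set $Q_1\cap Q_2$ is rich enough to contain the required $x_+,x_-$, but this is built into the hypothesis that $B_3$ is indefinite on that set.
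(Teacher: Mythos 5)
Your proof is correct and follows essentially the same route as the paper, whose proof is precisely ``analogous to Lemma \ref{prop_simetria}, using Theorem \ref{prop_krein_smuljan_2} instead of Theorem \ref{teo_krein_smuljan_azizov}'': you use Theorem \ref{prop_krein_smuljan_2} with $B_j$ in the role of $A$ for the direction from weak indefiniteness, and the standard evaluation-at-witness-vectors contradiction (as in Lemma \ref{muy indef implica indef}) for the converse. No gaps; the WLOG reductions you flag (relabeling indices, replacing $B_3$ by $-B_3$, passing to the weakly indefinite subset $\{B_1,B_2\}$) are all legitimate.
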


\begin{proof}
It is analogous to the proof of Lemma \ref{prop_simetria}, using Theorem \ref{prop_krein_smuljan_2} instead of Theorem \ref{teo_krein_smuljan_azizov}.
%
\end{proof}

\section{Indefinite sets}

\begin{Def}\label{very indef}
Given selfadjoint operators $B_1,\ldots,B_m\in\mc{L}(\HH)$, $m\geq2$, the set $\{B_1,\ldots,B_m\}$ is {\it indefinite} if $B_j$ is indefinite in $\bigcap_{i\neq j} Q_i$ for every $j=1,\ldots,m$.
\end{Def}

Note that the above definition imposes that $\bigcap_{i\neq j} Q_i\neq\{0\}$ for any $j=1,\ldots,m$.

\begin{lema}\label{muy indef implica indef}
Assume that $\{B_1,\ldots,B_m\}$ is indefinite. Then, $\{B_1,\ldots,B_m\}$ is weakly indefinite.
\end{lema}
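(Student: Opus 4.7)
The plan is to argue by contradiction, leveraging Definition~\ref{very indef} directly. Suppose $\{B_1,\ldots,B_m\}$ is indefinite but \emph{not} weakly indefinite. Then there exists $(\mu_1,\ldots,\mu_m)\in\RR^m\setminus\{0\}$ such that $B:=\sum_{i=1}^m\mu_iB_i$ is semidefinite. Up to replacing the tuple $(\mu_i)$ by $(-\mu_i)$, which simply changes the sign of $B$, we may assume $B\geq 0$.

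Next, I would fix any index $j\in\{1,\ldots,m\}$ with $\mu_j\neq 0$, which exists because $(\mu_1,\ldots,\mu_m)\neq 0$. For every $x\in\bigcap_{i\neq j}Q_i$ we have $\PI{B_ix}{x}=0$ for all $i\neq j$, and therefore
\[
0\leq \PI{Bx}{x}=\sum_{i=1}^m\mu_i\PI{B_ix}{x}=\mu_j\PI{B_jx}{x}.
\]
Dividing by $\mu_j$ yields $\PI{B_jx}{x}\geq 0$ if $\mu_j>0$, or $\PI{B_jx}{x}\leq 0$ if $\mu_j<0$, for every $x\in\bigcap_{i\neq j}Q_i$. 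In either case, $B_j$ is semidefinite on $\bigcap_{i\neq j}Q_i$, contradicting the hypothesis that $\{B_1,\ldots,B_m\}$ is indefinite.

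The argument is short and essentially mechanical: the hypothesis of indefiniteness was tailored exactly so that testing $B$ on the subspace $\bigcap_{i\neq j}Q_i$ isolates a single summand $\mu_j B_j$. No analytic or convex-geometric machinery from Section~\ref{Indef} is needed here; in particular neither Theorem~\ref{teo_krein_smuljan_azizov} nor Theorem~\ref{prop_krein_smuljan_2} is invoked. The only mild subtleties are selecting a nonzero coordinate $\mu_j$ and handling its sign together with the sign of the putative semidefiniteness of $B$, both handled by symmetry. Hence there is no real obstacle; the main work in this section is really Definition~\ref{very indef} and its subsequent use, not this implication.
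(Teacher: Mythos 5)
Your proof is correct and follows essentially the same route as the paper: pick a nonzero coordinate $\mu_j$, test the semidefinite combination on $\bigcap_{i\neq j}Q_i$ so that only the term $\mu_j B_j$ survives, and conclude that $B_j$ is semidefinite there, contradicting indefiniteness (the paper phrases this by dividing through by $\mu_j$, which is the same computation). No gaps; nothing further is needed.
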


\begin{proof}
The proof is similar to that corresponding to Lemma \ref{prop_simetria}.
Suppose that there exists $\mu\in\RR^m\setminus\{0\}$ such that
$\sum_{i=1}^m\mu_iB_i\geq0$ and $\mu_j\neq0$ for some $j=1,\ldots,m$. If $\mu_j>0$ then $B_j+\sum_{i\neq j}\frac{\mu_i}{\mu_j}B_i\geq0$, so that $B_j\geq0$ in
$\bigcap_{i\neq j}Q_i$, leading to a contradiction. A similar argument holds if $\mu_j<0$.
\end{proof}



\begin{obs}
Consider an indefinite set $\{B_1,\ldots,B_m\}$. If $\bigcap_{i=1}^m Q_i=\{0\}$ then it is a maximal  indefinite set.

In fact, given any selfadjoint $B\in\mc{L}(\HH)$ such that $B\neq B_i$ for $i=1,\ldots,m$, by definition, a necessary condition for the set $\{B_1,\ldots,B_m,B\}$ to be indefinite is that $\bigcap_{i=1}^m Q_i\neq\{0\}$.
\end{obs}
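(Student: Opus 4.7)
The plan is to show that for any selfadjoint operator $B\in\mc{L}(\HH)$ distinct from each of $B_1,\ldots,B_m$, the augmented family $\mc{F}=\{B_1,\ldots,B_m,B\}$ violates Definition \ref{very indef}, which is precisely what maximality of $\{B_1,\ldots,B_m\}$ amounts to.

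First I would unpack Definition \ref{very indef} for the family $\mc{F}$ of $m+1$ operators. Writing $Q_{m+1}=Q(B)$, the definition requires that each member of $\mc{F}$ be indefinite on the intersection of the neutral cones of the remaining $m$ operators of $\mc{F}$. In particular, specializing to the index $m+1$, the operator $B$ must be indefinite on $\bigcap_{i=1}^{m}Q_i$.

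Next I would recall that for a selfadjoint operator $T\in\mc{L}(\HH)$ to be indefinite on a subset $S\subseteq\HH$ one needs vectors $x_+,x_-\in S$ with $\PI{Tx_+}{x_+}>0$ and $\PI{Tx_-}{x_-}<0$, so $S$ must at the very least contain nonzero vectors. Under the hypothesis $\bigcap_{i=1}^{m}Q_i=\{0\}$ this already fails, independently of the choice of $B$, and hence $\mc{F}$ cannot be indefinite. The argument presents no real obstacle: it is a direct unwinding of Definition \ref{very indef}, and the remark is perhaps best read as a consistency check on the terminology rather than a substantive step in the theory.
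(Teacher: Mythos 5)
Your argument is correct and is exactly the paper's: specializing Definition \ref{very indef} to the new index forces $B$ to be indefinite on $\bigcap_{i=1}^{m}Q_i$, which is impossible when that intersection is $\{0\}$ since indefiniteness requires nonzero vectors with strictly positive and strictly negative quadratic form values. No difference in approach.
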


\begin{example}\label{conj indef pero no strongly}
In what follows we give an example of a maximal indefinite set.
Consider the operators $B_1,\ldots,B_4$ acting on $\CC^4$ which are represented by
\begin{align*}
B_1=\begin{bmatrix}
1& 0& 0& 0\\
0& -1& 0& 0\\
0&0&0&1\\
0&0&1&0
\end{bmatrix},\quad
B_2=\begin{bmatrix}
0&1&0&0\\
1&0&0&0\\
0&0&1&0\\
0&0&0&-1
\end{bmatrix},\\
B_3=\begin{bmatrix}
1&0&0&0\\
0&0&1&0\\
0&1&0&0\\
0&0&0&-1
\end{bmatrix},\quad
B_4=\begin{bmatrix}
0&0&0&1\\
0&1&0&0\\
0&0&-1&0\\
1&0&0&0
\end{bmatrix}.
\end{align*}
These four matrices satisfy that $B_j$ is indefinite in $\bigcap_{i\neq j}Q_i$ for $j=1,2,3,4$. Indeed,
\begin{eqnarray*}
{\scriptstyle(1-\sqrt{2},-1,1-\sqrt{2},1)}\in \mc{P}_1^-\cap \bigcap_{i\neq 1}Q_i, \ & \ {\scriptstyle(3-\sqrt{2},1+2\sqrt{2},-1+5\sqrt{2},7)}\in \mc{P}_1^+\cap \bigcap_{i\neq 1}Q_i, \\
{\scriptstyle(1-\sqrt{2},1,-1+\sqrt{2},1)}\in \mc{P}_2^-\cap\bigcap_{i\neq 2}Q_i, \ & \  {\scriptstyle(1+\sqrt{2},1,-1-\sqrt{2},1)}\in \mc{P}_2^+\cap \bigcap_{i\neq 2}Q_i, \\
{\scriptstyle(1-\sqrt{2},-1,-1+\sqrt{2},1)}\in \mc{P}_3^-\cap\bigcap_{i\neq 3}Q_i, \ & \  {\scriptstyle(1-\sqrt{2},-1+2\sqrt{2},3-\sqrt{2},1)}\in \mc{P}_3^+\cap\bigcap_{i\neq 3}Q_i, \\
{\scriptstyle(1-5\sqrt{2},-1-2\sqrt{2},-3+\sqrt{2},7)}\in \mc{P}_4^-\cap\bigcap_{i\neq 4}Q_i, \ & \  {\scriptstyle(-1+\sqrt{2},1,-1+\sqrt{2},1)}\in \mc{P}_4^+\cap\bigcap_{i\neq 4}Q_i.
\end{eqnarray*}
Nevertheless, $\bigcap_{i=1}^4 Q_i=\{0\}$ because the system of equations
\begin{align*}
\left\{ 
\begin{array}{rcl}
|x_1|^2+|x_2|^2+2\real(x_3x_4) &=& 0 \\ 
\,2\real(x_1x_2)+|x_3|^2+|x_4|^2 &=& 0\\
|x_1|^2+2\real(x_2x_3)+|x_4|^2 &=& 0\\
2\real(x_1x_4)+|x_2|^2+|x_3|^2 &=&0 
\end{array}
\right.
\end{align*}
admits only the trivial solution.

\end{example}

\begin{lema}\label{lema_opciones} 
Let $\{B_1,\ldots,B_m\}$ be an indefinite set.
Take $y\in\mc{P}^-_k\cap\bigcap_{i\neq k}Q_i$ and $z\in\mc{P}^+_k\cap\bigcap_{i\neq k}Q_i$ for some $k=1,\ldots,m$ and consider $\mc{S}:=\{\alpha y+\beta z\,:\,\alpha,\beta\in\RR\}$. Then,
\[
\text{either}\qquad\mc{S}\subseteq\textstyle{\bigcap_{i\neq k}Q_i}\qquad\text{or}\qquad\mc{S}\cap Q_k\cap Q_l=\{0\}\qquad\text{for some $l\neq k$}.
\]
\end{lema}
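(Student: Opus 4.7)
The proof plan is a direct computation followed by a case split on the real cross-terms $\operatorname{Re}\PI{B_iy}{z}$.

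First I would expand the quadratic form for an arbitrary element $\alpha y+\beta z\in\mc{S}$ with $\alpha,\beta\in\RR$. Using $\PI{B_iy}{y}=\PI{B_iz}{z}=0$ for every $i\neq k$ (since $y,z\in\bigcap_{i\neq k}Q_i$), one gets
\[
\PI{B_i(\alpha y+\beta z)}{\alpha y+\beta z}=2\alpha\beta\,\real\PI{B_iy}{z},\qquad i\neq k.
\]
Thus membership of $\mc{S}$ in $Q_i$ (for $i\neq k$) is controlled entirely by the number $c_i:=\real\PI{B_iy}{z}$. Similarly, writing $a:=\PI{B_ky}{y}<0$, $c:=\PI{B_kz}{z}>0$ and $b:=\real\PI{B_ky}{z}$, one has
\[
\PI{B_k(\alpha y+\beta z)}{\alpha y+\beta z}=\alpha^{2}a+2\alpha\beta b+\beta^{2}c.
\]

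Now I split into two cases according to whether the coefficients $c_i=\real\PI{B_iy}{z}$ all vanish. If $c_i=0$ for every $i\neq k$, the first display shows $\alpha y+\beta z\in Q_i$ for all $\alpha,\beta\in\RR$ and all $i\neq k$, hence $\mc{S}\subseteq\bigcap_{i\neq k}Q_i$, which is the first alternative. Otherwise, pick some $l\neq k$ with $c_l\neq 0$. Then $\alpha y+\beta z\in Q_l$ forces $2\alpha\beta c_l=0$, so either $\alpha=0$ or $\beta=0$.

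For the second alternative I would then check that each of these two one-dimensional subfamilies meets $Q_k$ only at the origin: if $\alpha=0$ then $\PI{B_k(\beta z)}{\beta z}=\beta^{2}c$ with $c>0$, forcing $\beta=0$; symmetrically, if $\beta=0$ then $\PI{B_k(\alpha y)}{\alpha y}=\alpha^{2}a$ with $a<0$, forcing $\alpha=0$. Hence $\mc{S}\cap Q_k\cap Q_l=\{0\}$, giving the second alternative.

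I do not anticipate any real obstacle here; the argument is essentially a dichotomy on a single bilinear form. The only thing to be attentive to is that $\alpha,\beta$ are taken to be \emph{real} scalars, so that the cross term in the expansion of $\PI{B_i(\alpha y+\beta z)}{\alpha y+\beta z}$ is $2\alpha\beta\,\real\PI{B_iy}{z}$ (and not the complex inner product itself); this is what makes the ``cross-term vanishing'' condition $c_i=0$ the exact criterion for $\mc{S}$ to lie inside $Q_i$.
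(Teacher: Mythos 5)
Your proof is correct and follows essentially the same route as the paper's: both reduce membership of $\alpha y+\beta z$ in $Q_l$ (for $l\neq k$) to the vanishing of $2\alpha\beta\,\real\PI{B_ly}{z}$, and then use $y,z\notin Q_k$ to conclude $\mc{S}\cap Q_k\cap Q_l=\{0\}$ when some cross term is nonzero. Your write-up merely makes explicit the expansion of the quadratic form that the paper leaves implicit.
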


\begin{proof}
Assume that there exists $l\neq k$ such that
$\mc{S}\nsubseteq Q_l$. 
This is equivalent to $\real\PI{B_ly}{z}\neq0$.
Then for any $\alpha,\beta\in\RR$, $\alpha\neq0$, $\beta\neq0$,
$\alpha y+\beta z\notin Q_l$. Since $y,z\notin Q_k$ we get that $\mc{S}\cap Q_k\cap Q_l=\{0\}$.
	\end{proof}

The following lemma shows that, under suitable hypotheses, proving that the notions of indefinite and weakly indefinite sets coincide is equivalent to generalizing Krein-\v{S}mul'jan theorem.

\begin{lema}\label{problemas equiv}
Given $B_1,\ldots,B_m\in \mc{L}(\HH)$, assume that $\{B_1,\ldots,B_m\}$ is linearly independent, $B_j\not\equiv0$ in $\bigcap_{i\neq j} Q_i$ for every $j=1,\ldots,m$, and  $\{B_j\}_{j\in J}$ is weakly indefinite for every $J\subset\{1,\ldots,m\}$ with $|J|=m-1$. Then, the following statements are equivalent:
\begin{enumerate}
	\item[i)] there exists $j=1,\ldots,m$ such that $B_j\geq 0$	in $\bigcap_{i\neq j} Q_i$ if and only if $B_j + \sum_{i\neq j} \la_iB_i\geq 0$ for some $(\la_j)_{j\neq i}\in\RR^{m-1}$;
	
	
	\item[ii)] $\{B_1,\ldots,B_m\}$ is indefinite if and only if $\{B_1,\ldots,B_m\}$ is weakly indefinite.
\end{enumerate}
\end{lema}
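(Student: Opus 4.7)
My plan is to use that each of the two biconditionals has a trivial half, reducing the claim to two implications. In (i), the direction ``$B_j+\sum_{i\neq j}\lambda_iB_i\geq 0$ for some $\lambda$ implies $B_j\geq 0$ on $\bigcap_{i\neq j}Q_i$'' is immediate by restriction: for $x\in\bigcap_{i\neq j}Q_i$, every $\PI{B_ix}{x}$ with $i\neq j$ vanishes, so $\PI{B_jx}{x}\geq 0$. Likewise, by Lemma \ref{muy indef implica indef}, indefinite already implies weakly indefinite. So I only need to establish (a) under (ii), the existence of $j$ with $B_j\geq 0$ on $\bigcap_{i\neq j}Q_i$ produces the desired $\lambda_i$'s, and (b) under (i), weakly indefinite implies indefinite.

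For (a), suppose $B_j\geq 0$ on $\bigcap_{i\neq j}Q_i$ for some $j$. Since $B_j\not\equiv 0$ there by hypothesis, $B_j$ is semidefinite but nonzero on this subspace, hence not indefinite on $\bigcap_{i\neq j}Q_i$, and consequently the family $\{B_1,\ldots,B_m\}$ is not indefinite. By (ii) it is not weakly indefinite, so some $\mu\in\RR^m\setminus\{0\}$ satisfies $\sum_i\mu_iB_i\geq 0$. I then argue $\mu_j>0$ in two steps: first, the weak indefiniteness of the $(m-1)$-subfamily $\{B_i\}_{i\neq j}$ (hypothesis) forces $\mu_j\neq 0$, since otherwise $(\mu_i)_{i\neq j}$ would give a nontrivial positive combination of that subfamily; second, picking $x_0\in\bigcap_{i\neq j}Q_i$ with $\PI{B_jx_0}{x_0}>0$ (which exists because $B_j$ is $\geq 0$ but not identically zero there) and evaluating $\sum_i\mu_iB_i\geq 0$ at $x_0$ rules out $\mu_j<0$. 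Setting $\lambda_i=\mu_i/\mu_j$ completes (i).

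For (b), I argue the contrapositive. If the family is not indefinite, some $B_k$ fails to be indefinite on $\bigcap_{i\neq k}Q_i$; by nontriviality, either $B_k\geq 0$ or $-B_k\geq 0$ nontrivially there. In the first subcase, (i) applies directly and yields a nontrivial positive combination of $\{B_1,\ldots,B_m\}$, contradicting weak indefiniteness. In the second subcase, I would pass to the family with $B_k$ replaced by $-B_k$: all three hypotheses of the lemma (linear independence, weak indefiniteness of every $(m-1)$-subfamily, and nontriviality on each intersection) are preserved, since each depends only on the quadratic forms $\PI{B_ix}{x}$ or on the cone of positive combinations, both of which are sign-symmetric. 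Applying (i) to the modified family then produces a nontrivial $\mu\in\RR^m$ (now with $\mu_k=-1$) satisfying $\sum_i\mu_iB_i\geq 0$, again a contradiction.

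The main obstacle I anticipate is making the sign-flip step in (b) fully rigorous. Statement (i) as written concerns the specific family $\{B_1,\ldots,B_m\}$ with the coefficient of $B_j$ fixed at $+1$, so one must argue that the assumed equivalence (i) is intrinsic to the family up to individual sign changes --- equivalently, that (i) for $\{B_1,\ldots,B_m\}$ entails (i) for $\{B_1,\ldots,-B_k,\ldots,B_m\}$ --- before one can invoke it to handle the subcase $B_k\leq 0$ on $\bigcap_{i\neq k}Q_i$.
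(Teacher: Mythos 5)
Your argument is correct and follows essentially the same route as the paper's own proof: the same identification of the trivial halves, the same use of the weak indefiniteness of the $(m-1)$-subfamilies to force the coefficient of $B_j$ to be nonzero together with a test vector $x_0\in\bigcap_{i\neq j}Q_i$ with $\PI{B_jx_0}{x_0}>0$ to fix the sign, and the same two-subcase contrapositive for the other direction. The sign-flip step you flag as a possible obstacle is exactly what the paper does without further comment (it invokes statement i) directly for $-B_j+\sum_{i\neq j}\mu_iB_i$), so relative to the paper's argument your proposal is, if anything, more explicit about the only delicate point.
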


\begin{proof}
%
Assume that $i)$ holds and also that $\{B_1,\ldots,B_m\}$ is not indefinite, i.e. there exists $j=1,\ldots,m$ such that $B_j$ is semidefinite in $\bigcap_{i\neq j} Q_i$. If $B_j\geq 0$ in $\bigcap_{i\neq j} Q_i$ then, by $i)$, there exists $(\la_i)_{i\neq j}\in\RR^{m-1}$ such that $B_j + \sum_{i\neq j} \la_iB_i\geq 0$. If $B_j\leq 0$ in $\bigcap_{i\neq j} Q_i$ then $-B_j\geq 0$ in $\bigcap_{i\neq j} Q_i$ and, by $i)$, there exists $(\mu_i)_{i\neq j}\in\RR^{m-1}$ such that $-B_j + \sum_{i\neq j} \mu_iB_i\geq 0$.  Therefore, $\{B_1,\ldots,B_m\}$ is not weakly indefinite. The converse implication is always true, see Lemma \ref{muy indef implica indef}. Thus, $\rm i)$ implies $\rm ii)$. 


Conversely, assume that $ii)$ holds and also that there exists $j=1,\ldots,m$ such that $B_j\geq 0$ in $\bigcap_{i\neq j} Q_i$. Then, by $ii)$, $\{B_1,\ldots,B_m\}$ is neither indefinite nor weakly indefinite. Hence, there exists $(\la_i)_{i\neq j}\in\RR^{m-1}$ such that $B_j + \sum_{i\neq j} \la_iB_i$ is semidefinite. Since $B_j\not\equiv0$ in $\bigcap_{i\neq j} Q_i$, there exists $x\in \bigcap_{i\neq j} Q_i$ such that $\PI{B_jx}{x}>0$. Hence,
\[
\PI{\big(B_j + \sum_{i\neq j} \la_iB_i\big)x}{x}=\PI{B_j x}{x}>0,
\]
which proves that $B_j + \sum_{i\neq j} \la_iB_i\geq 0$. The converse implication is immediate. Therefore, $ii)$ implies $i)$.
\end{proof}


\section{Strongly indefinite sets}

Given a set $\{B_1,\ldots,B_m\}$ of selfadjoint operators, our aim is to impose condition(s) onto it in order to prove a generalization of Krein-Smul'jan theorem of the form: if $A\in\mc{L}(\HH)$ is selfadjoint then,
\begin{equation}\label{KSm}
A\geq0\quad\text{in}\quad \textstyle{\bigcap_{i=1}^m}Q_i\qquad\text{if and only if}\qquad\Omega\neq\varnothing.
\end{equation}

\begin{obs}
If \eqref{KSm} holds for a weakly indefinite set $\{B_1,\ldots,B_m\}$ then $\bigcap_{i=1}^mQ_i\neq \{0\}$. 

Indeed, given a weakly indefinite set $\{B_1,\ldots,B_m\}$ (where $m$ is less than the dimension of the real subspace of selfadjoint operators in $\HH$) suppose that $\bigcap_{i=1}^mQ_i= \{0\}$ and consider any selfadjoint operator $B\in\mc{L}(\HH)$ such that $\{B_1,\ldots,B_m, B\}$ is a linearly independent set. Since both $B\geq 0$ and $-B\geq 0$ in  $\bigcap_{i=1}^mQ_i= \{0\}$, if \eqref{KSm} holds then there exist $(\la_1,\ldots,\la_m),(\mu_1,\ldots,\mu_m)\in\RR^m\setminus\{0\}$ such that 
\[
B + \sum_{i=1}^m \la_i B_i \geq 0 \qquad \text{and} \qquad -B + \sum_{i=1}^m \mu_i B_i\geq 0.
\]
Then, there exists $j=1,\ldots,m$ such that $\mu_j\neq -\la_j$, otherwise, $B+ \sum_{i=1}^m \la_i B_i=0$ which is a contradiction to  $\{B_1,\ldots,B_m, B\}$ being linearly independent.

Hence, adding the above inequalities we get that $\sum_{i=1}^m (\la_i + \mu_i)B_i\geq 0$ which is a contradiction to $\{B_1,\ldots,B_m\}$ being a weakly indefinite set.
\end{obs}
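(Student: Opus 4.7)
The plan is to argue by contrapositive: I will assume $\bigcap_{i=1}^m Q_i = \{0\}$ and derive a contradiction with \eqref{KSm} together with the weak indefiniteness of $\{B_1,\ldots,B_m\}$. The key heuristic is that when the common neutral set collapses to $\{0\}$, the hypothesis ``$A \geq 0$ on $\bigcap_{i=1}^m Q_i$'' becomes vacuous, so \eqref{KSm} would force $\Omega\big(A,(B_i)_{i=1}^m\big) \neq \varnothing$ for essentially any selfadjoint $A$. This is far too permissive and should clash with the fact that no nontrivial real combination of the $B_i$'s is semidefinite.

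To convert this heuristic into a proof, I would produce a bounded selfadjoint operator $B$ such that $\{B_1,\ldots,B_m,B\}$ is linearly independent; existence of such a $B$ uses the stated dimension hypothesis on the real subspace of selfadjoint operators in $\mc{L}(\HH)$, and is automatic in infinite dimensions. Both $B$ and $-B$ trivially satisfy the positivity condition on $\bigcap_{i=1}^m Q_i = \{0\}$, so \eqref{KSm} applied to $A:=B$ and then to $A:=-B$ supplies vectors $\lambda,\mu\in\RR^m$ with
\[
B + \sum_{i=1}^m \lambda_i B_i \geq 0 \qquad\text{and}\qquad -B + \sum_{i=1}^m \mu_i B_i \geq 0.
\]

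The finishing step is a short dichotomy after adding these inequalities, which yields $\sum_{i=1}^m (\lambda_i+\mu_i) B_i \geq 0$. If the coefficient vector $\lambda+\mu$ is nonzero, this directly contradicts weak indefiniteness of $\{B_1,\ldots,B_m\}$. Otherwise $\mu=-\lambda$, so the second inequality becomes $-\bigl(B + \sum_{i=1}^m \lambda_i B_i\bigr) \geq 0$; together with the first, this forces $B + \sum_{i=1}^m \lambda_i B_i = 0$, i.e.\ $B \in \Span\{B_1,\ldots,B_m\}$, contradicting the choice of $B$.

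The main obstacle I anticipate is purely in the setup: one must ensure that a selfadjoint extension $B$ with $\{B_1,\ldots,B_m,B\}$ linearly independent is available. This is where the dimensional restriction on $m$ in the statement plays its role; beyond that caveat, the argument is a clean contradiction driven by the symmetry between $B$ and $-B$ and by the summing trick that converts the pair of feasibility witnesses into a forbidden semidefinite combination of the $B_i$'s.
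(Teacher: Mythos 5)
Your proposal is correct and follows essentially the same route as the paper: choose a selfadjoint $B$ making $\{B_1,\ldots,B_m,B\}$ linearly independent, apply \eqref{KSm} to both $B$ and $-B$ (the hypothesis on $\bigcap_{i=1}^m Q_i=\{0\}$ being vacuous), add the two resulting inequalities, and rule out $\mu=-\la$ via linear independence to contradict weak indefiniteness. The only difference is cosmetic: the paper excludes $\mu=-\la$ before summing, while you handle it as a case afterwards.
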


Example \ref{conj indef pero no strongly} presents an indefinite set $\{B_1,B_2,B_3,B_4\}$ such that $\bigcap_{i=1}^4 Q_i=\{0\}$. Hence, for $m\geq 3$ assuming that $\{B_1,\ldots,B_m\}$ is weakly indefinite, or even indefinite, is not enough as a suitable hypothesis for generalizing Theorem \ref{teo_krein_smuljan_azizov}. 

\medskip

If $\{B_1,\ldots,B_m\}$ is a weakly indefinite set with $m\geq 3$ then, by Corollary \ref{prop_simetria_2}, any trio $\{B_i,B_j,B_k\}$ is an indefinite set. In particular $B_i$ is indefinite in $Q_j\cap Q_k$, i.e. there always exist $x_+\in\mc{P}_i^+\cap Q_j\cap Q_k$ and $x_-\in\mc{P}_i^-\cap Q_j\cap Q_k$.

Also, by Lemma \ref{lema_opciones}, if $x_\pm\in \mc{P}_i^\pm\cap Q_j\cap Q_k$ and $\mc{S}:=\{\alpha x_-+\beta x_+:\,\alpha,\beta\in\RR\}$ then either $\mc{S}\subseteq Q_j\cap Q_k$ or $\mc{S}\cap Q_i\cap Q_j=\{0\}$ or $\mc{S}\cap Q_i\cap Q_k=\{0\}$.


\begin{Def} \label{strongly indef}
Given selfadjoint operators $B_1,\ldots,B_m\in\mc{L}(\HH)$, $m\geq2$, the set $\{B_1,\ldots,B_m\}$ is {\it strongly indefinite} if
\begin{enumerate}[label=\roman*)]
\item $\{B_1,\ldots,B_m\}$ is weakly indefinite;
\item given $i,j,k=1,\ldots,m$, if $x_\pm\in \mc{P}_i^\pm\cap Q_j\cap Q_k$ then there exists $\theta\in[0,\pi)$ such that
$B_j$ and $B_k$ are definite in $\{\alpha x_-+\beta e^{i\theta}x_+:\alpha,\beta\in\RR\}$.
\end{enumerate}
\end{Def}

\medskip

Given a selfadjoint operator $B\in\mc{L}(\HH)$, assume that $\{y,z\}$ is a linearly independent set in $Q(B)$. Then, $B$ is definite in $\{\alpha y+\beta z:\alpha,\beta\in\RR\}$ if and only
if $\real\PI{By}{z}=0$. In fact, if
$x_\pm=ty\pm(1-t)z$ for some $t\in \RR$ then 
$$
\PI{Bx_\pm}{x_\pm}= \pm2t(1-t)\real\PI{By}{z},
$$ 
and these two real numbers have the same sign if and only if $\real\PI{By}{z}=0$. Therefore, item $\rm ii)$ in Definition \ref{strongly indef} can be alternatively stated as:
\begin{align*}
   \text{{\rm ii')}} \  & \text{given $i,j,k=1,\ldots,m$, \ if $x_\pm\in \mc{P}_i^\pm\cap Q_j\cap Q_k$ then there exists}\ \theta\in[0,\pi)  \\ &  \text{such that $\real\PI{B_j x_+}{e^{i\theta}x_-}=\real\PI{B_k x_+}{e^{i\theta}x_-}=0$.}
\end{align*}

\begin{obs}

If $\{B_1,\ldots,B_m\}$ is strongly indefinite, then it is immediate that $\{B_i\}_{i\in\mc{F}}$ is strongly indefinite for every $\mc{F}\subseteq\{1,\ldots,m\}$.
\end{obs}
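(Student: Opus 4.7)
The plan is to verify, one at a time, that both defining conditions of strong indefiniteness descend to subsets $\{B_i\}_{i\in\mc{F}}$.

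First, I would check weak indefiniteness. Given any nonzero tuple $(\mu_i)_{i\in\mc{F}}$ in $\RR^{|\mc{F}|}$, I would pad it with zeros outside $\mc{F}$ to produce a nonzero vector $(\mu_1,\ldots,\mu_m)\in\RR^m\setminus\{0\}$ satisfying $\sum_{i\in\mc{F}}\mu_iB_i=\sum_{i=1}^m\mu_iB_i$. Weak indefiniteness of $\{B_1,\ldots,B_m\}$ then guarantees this operator is indefinite, which yields condition $\rm i)$ of Definition \ref{strongly indef} for the subset.

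Next, I would handle condition $\rm ii)$. The key observation is that $Q_j$ and $\mc{P}_i^\pm$ depend only on the individual operators $B_i,B_j$, so these sets are identical whether we view them inside the ambient collection or inside the subset. Consequently, for any indices $i,j,k\in\mc{F}\subseteq\{1,\ldots,m\}$ and any $x_\pm\in\mc{P}_i^\pm\cap Q_j\cap Q_k$, applying the analogous condition for the ambient set $\{B_1,\ldots,B_m\}$ to the triple $(i,j,k)$ and the vectors $x_\pm$ produces the desired $\theta\in[0,\pi)$ such that $B_j$ and $B_k$ are definite on $\{\alpha x_-+\beta e^{i\theta} x_+:\alpha,\beta\in\RR\}$.

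There is really no obstacle here: the remark is labeled ``immediate'' precisely because condition $\rm ii)$ is a universal statement quantified over index triples, and condition $\rm i)$ reduces to the ambient hypothesis by trivial coefficient extension. No inductive argument or delicate estimate will be necessary; the claim is a structural consequence of how the conditions are phrased.
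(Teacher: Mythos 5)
Your proposal is correct and coincides with the reasoning the paper leaves implicit: condition $\rm i)$ passes to subsets by extending coefficient tuples with zeros (exactly the heredity of weak indefiniteness already noted in Section~\ref{Indef}), and condition $\rm ii)$ is a universally quantified statement over triples of indices, so it restricts trivially to any $\mc{F}\subseteq\{1,\ldots,m\}$. Nothing further is needed.
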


\smallskip

From now on we assume that $m> 2$. Given $y,z\in\HH$, $y\neq z$, consider
\[
[y,z]:= \big\{\,ty+(1-t)z\,:\,t\in[0,1]\big\},
\]
and $(y,z):= \big\{\,ty+(1-t)z\,:\,t\in(0,1)\big\}$.

\begin{prop}\label{lema_horrible} 
Let $\{B_1,\ldots,B_m\}$ be a strongly indefinite set. Given $i=1,\ldots,m$, if there exists $x_\pm\in\mc{P}_i^\pm\cap\bigcap_{j\neq i}Q_j$ then there exists $\theta\in[0,\pi)$ such that
\[
[x_-,\pm e^{i\theta}x_+]\subseteq\textstyle{\bigcap_{j\neq i}}Q_j.
\]
Moreover, there exists $y_i^\pm\in (x_-, \pm e^{i\theta}x_+)$ such that
\begin{equation}
y_i^\pm \in \bigcap_{i=1}^m Q_i\setminus N\big(\textstyle{\sum_{j=1}^m \mu_j B_j}\big)
\end{equation}
for every $(\mu_1,\ldots,\mu_m)\in \RR^{m}$ with $\mu_i\neq 0$.
\end{prop}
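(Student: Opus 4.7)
The plan is threefold: first build a single angle $\theta$ that handles every $j \neq i$, then apply the intermediate value theorem on the resulting segments to produce $y_i^\pm$, and finally verify the null-space condition by a derivative argument on a restricted quadratic form. For the first step, set $c_j := \PI{B_j x_+}{x_-} \in \CC$ for each $j \neq i$. A direct expansion, using $x_\pm \in Q_j$, gives $\PI{B_j(\alpha x_- + \beta e^{i\theta} x_+)}{\alpha x_- + \beta e^{i\theta} x_+} = 2\alpha\beta\,\real(e^{i\theta} c_j)$, so the containment $[x_-, \pm e^{i\theta} x_+] \subseteq Q_j$ reduces to the scalar condition $\real(e^{i\theta} c_j) = 0$. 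Applying condition ii) of Definition \ref{strongly indef} to each triple $(i,j,k)$ with $j,k \neq i$ produces some $\theta_{jk}$ making both $B_j$ and $B_k$ identically zero on the real plane $\{\alpha x_- + \beta e^{i\theta_{jk}} x_+ : \alpha,\beta \in \RR\}$, which translates to $c_j$ and $c_k$ lying on a common real line through the origin in $\CC \cong \RR^2$. Hence the whole family $\{c_j\}_{j \neq i}$ lies on a single real line $L \subseteq \CC$, and choosing $\theta \in [0,\pi)$ with $e^{i\theta} L \subseteq i\RR$ enforces $\real(e^{i\theta}c_j) = 0$ simultaneously for every $j \neq i$, whence $[x_-, \pm e^{i\theta} x_+] \subseteq \bigcap_{j \neq i} Q_j$.

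For the second step, set $\gamma^\pm(t) := t x_- + (1-t)(\pm e^{i\theta}) x_+$ and $g^\pm(t) := \PI{B_i \gamma^\pm(t)}{\gamma^\pm(t)}$. By the first step, $\gamma^\pm([0,1]) \subseteq \bigcap_{j \neq i} Q_j$, and $g^\pm$ is a real polynomial of degree at most $2$ with $g^\pm(0) = \PI{B_i x_+}{x_+} > 0$ and $g^\pm(1) = \PI{B_i x_-}{x_-} < 0$, so by the IVT there exist $t_\pm \in (0,1)$ with $g^\pm(t_\pm) = 0$. Set $y_i^\pm := \gamma^\pm(t_\pm) \in \bigcap_{j=1}^m Q_j$. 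For the third step, suppose toward a contradiction that some $(\mu_j) \in \RR^m$ with $\mu_i \neq 0$ satisfies $\sum_j \mu_j B_j y_i^+ = 0$; rescaling $\mu_i = 1$ and setting $C := B_i + \sum_{j \neq i} \mu_j B_j$ yields $C y_i^+ = 0$. Since $\gamma^+(t) \in Q_j$ for $j \neq i$, the polynomial $\PI{C \gamma^+(t)}{\gamma^+(t)}$ coincides with $g^+(t)$; differentiating at $t_+$ and using self-adjointness of $C$ gives $(g^+)'(t_+) = 2\real\PI{C y_i^+}{(\gamma^+)'(t_+)} = 0$. However, a nonzero real polynomial of degree at most $2$ with values of opposite signs at $0$ and $1$ cannot have a double root in $(0,1)$ (writing $g^+(t) = A(t-t_+)^2$ would force $g^+(0)$ and $g^+(1)$ to share sign), so $t_+$ is a simple root of $g^+$ and $(g^+)'(t_+) \neq 0$, the desired contradiction. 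The same reasoning applies to $y_i^-$.

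I expect the first step to be the main obstacle: strong indefiniteness only delivers the pairwise angles $\theta_{jk}$, and welding them into a single uniform $\theta$ relies on the fact that $\CC$ is two-dimensional over $\RR$, so pairwise collinearity of the $c_j$'s propagates to the entire family. With only the weakly indefinite hypothesis, no such geometric constraint on the $c_j$'s is available, which is precisely why strong indefiniteness (rather than merely weakly) is the correct assumption here.
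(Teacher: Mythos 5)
Your proposal is correct and follows essentially the same route as the paper: condition ii') of strong indefiniteness applied to triples forces all the quantities $\real\PI{B_jx_-}{e^{i\theta}x_+}$, $j\neq i$, to vanish for a single $\theta$ (the paper compares the angles $\theta_1,\theta_2$ against a fixed anchor $j$, you phrase it as collinearity of the numbers $c_j$ in $\CC\cong\RR^2$, which incidentally also covers the degenerate case $\PI{B_jx_-}{x_+}=0$ cleanly), and then the intermediate value theorem yields $y_i^\pm$ in $\bigcap_{j=1}^m Q_j$. The only cosmetic difference is the final kernel step, where the paper substitutes $Bx_-=-\tfrac{1-t}{t}e^{i\theta}Bx_+$ into the quadratic form to reach $0<0$, while you derive the same contradiction by noting that $Cy_i^+=0$ would make $t_+$ a double root of the degree-two polynomial $g^+$, impossible given its sign change on $[0,1]$.
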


\begin{proof}
Suppose that 
$x_\pm\in\mc{P}_i^\pm\cap\bigcap_{l\neq i}Q_l$ for some fixed $i=1,\ldots,m$. Now choose $j\in\{1,\ldots,m\}\setminus\{ i\}$. Considering $k_1,k_2\in\{1,\ldots,m\}\setminus\{ i\}$,
there exist $\theta_1,\theta_2\in[0,\pi)$
such that
\begin{align*}
\real\big(e^{-i\theta_1}\PI{B_jx_-}{x_+}\big)=& \ 0=\real\big(e^{-i\theta_1}\PI{B_{k_1}x_-}{x_+}\big), \ \text{and} \\
\real\big(e^{-i\theta_2}\PI{B_jx_-}{x_+}\big)=& \ 0=\real\big(e^{-i\theta_2}\PI{B_{k_2}x_-}{x_+}\big).
\end{align*}
This implies that $\theta_2=\theta_1+n\pi$ for some $n\in\NN$, and consequently
\begin{align*}
\real\PI{B_{k_2}x_-}{e^{i\theta_1}x_+} &=\pm\real\PI{B_{k_2}x_-}{e^{i\theta_2}x_+}=0.
\end{align*}
Since $k_2$ was arbitrary, it then holds that
\[
\real\PI{B_kx_-}{e^{i\theta_1}x_+}=0 \qquad\text{for every $k\neq i$}.
\]
Therefore, $[x_-,\pm e^{i\theta}x_+]\subseteq Q_k$ for every $k\neq i$ because $x_\pm\in\bigcap_{k\neq i}Q_k$.

Finally, following the same procedure as in the proof of Proposition \ref{prop_simetria}, 
there exists $t\in (0,1)$ such that
\[
y_i^+:=tx_-+(1-t)e^{i\theta}x_+\in \textstyle{\bigcap_{i=1}^m}Q_i\setminus\{0\}.
\]
Given $(\mu_1,\ldots,\mu_m)\in\RR^m$ with $\mu_i\neq 0$, 
consider $B:=B_i+\sum_{j\neq i}\tfrac{\mu_j}{\mu_i} B_j$ and assume that $By_i^+=0$.
Then, $Bx_-=-\frac{1-t}{t}{e^{i\theta}}Bx_+$. 
Since $x_\pm\in\bigcap_{j\neq i}Q_i$, we have that
\begin{align*}
0=\PI{By_i^+}{y_i^+}&=t^2\PI{Bx_-}{x_-}+(1-t)^2\PI{Bx_+}{x_+}+2t(1-t)\real\PI{Bx_-}{e^{i\theta}x_+}\\
&=t^2\PI{B x_-}{x_-}+(1-t)^2\PI{Bx_+}{x_+}-2(1-t)^2\real\PI{Bx_+}{x_+}\\
&=t^2\PI{B_ix_-}{x_-}-(1-t)^2\real\PI{B_ix_+}{x_+}<0,
\end{align*}
leading to a contradiction. Therefore, $y_i^+\notin N(B)$. A similar argument proves the existence of $y_i^-$.
\end{proof}

\begin{obs}
By the proof of Theorem \ref{prop_krein_smuljan_2}, every weakly indefinite set $\{B_1,B_2\}$ is strongly indefinite. Also, if $\{B_1,\ldots, B_m\}$ is a strongly indefinite set and there exists $j=1,\ldots,m$ such that $B_j$ is indefinite in $\bigcap_{i\neq j} Q_i$, then $\bigcap_{i=1}^m Q_i\neq\{0\}$ (see Proposition \ref{lema_horrible}). Hence, $\{B_1,B_2,B_3,B_4\}$ from Example \ref{conj indef pero no strongly} is an indefinite set which is not strongly indefinite.
\end{obs}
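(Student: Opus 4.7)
The remark consists of three assertions which I would verify in turn. For the first, consider a weakly indefinite pair $\{B_1,B_2\}$. I would check condition ii) of Definition \ref{strongly indef}: for $m=2$ the sets $\mc{P}_i^\pm\cap Q_i$ are empty, so the only nontrivial instances of the condition are those with $i\in\{1,2\}$ and $j=k$ the unique index different from $i$. Using the equivalent formulation ii'), one must exhibit $\theta\in[0,\pi)$ satisfying $\real\PI{B_j x_+}{e^{i\theta}x_-}=0$; writing $\PI{B_j x_+}{x_-}=re^{i\varphi}$, the choice $\theta\equiv\varphi+\pi/2\pmod{\pi}$ does the job. Thus every weakly indefinite pair is strongly indefinite, as was already used implicitly in the proof of Theorem \ref{prop_krein_smuljan_2}.

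For the second claim, suppose $\{B_1,\ldots,B_m\}$ is strongly indefinite and that $B_j$ is indefinite in $\bigcap_{i\neq j} Q_i$ for some $j$. By definition of indefiniteness on a subspace, one picks $x_\pm\in\mc{P}_j^\pm\cap\bigcap_{i\neq j} Q_i$; Proposition \ref{lema_horrible}, applied to this $j$ and these vectors, produces a nonzero element $y_j^+\in\bigcap_{i=1}^m Q_i$, so $\bigcap_{i=1}^m Q_i\neq\{0\}$.

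For the third claim, I would read off from Example \ref{conj indef pero no strongly} the explicit vectors listed in each row of the table, which belong to $\mc{P}_j^\pm\cap\bigcap_{i\neq j} Q_i$ for each $j=1,2,3,4$; in particular $B_j$ is indefinite in $\bigcap_{i\neq j} Q_i$ for every $j$. Since the example also verifies $\bigcap_{i=1}^4 Q_i=\{0\}$, the contrapositive of the second claim immediately yields that $\{B_1,B_2,B_3,B_4\}$ cannot be strongly indefinite, while its indefiniteness was already established in the example itself. The main care-point is in the first assertion, where one must notice the collapse of the index cases for $m=2$; after that, both remaining claims are brief applications of Proposition \ref{lema_horrible} and the numerical data of the example.
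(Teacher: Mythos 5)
Your proposal is correct and follows the same route the paper intends: for the pair case you verify condition ii') exactly as in the proof of Theorem \ref{prop_krein_smuljan_2} (choosing $\theta$ so that $\real\PI{B_j x_+}{e^{i\theta}x_-}=0$, after correctly noting that the only non-vacuous index configuration for $m=2$ is $j=k\neq i$), and the remaining two claims are the intended applications of Proposition \ref{lema_horrible} and of the data in Example \ref{conj indef pero no strongly}. No gaps.
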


%

The following result generalizes Krein-Smul'jan theorem for $m\geq 3$.

\begin{thm}\label{prop_ks_strongly} 
Given selfadjoint operators $A,B_1,\ldots,B_m\in\mc{L}(\HH)$, assume that
$\{B_1,B_2,\ldots,B_m\}$ is strongly indefinite. Then,
\[
A\geq0\quad\text{in}\quad\textstyle{\bigcap_{i=1}^m}Q_i\qquad\text{if and only if}\qquad\Omega\neq\varnothing.
\]
\end{thm}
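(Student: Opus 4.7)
The plan is to argue by induction on $m$, taking Theorem \ref{prop_krein_smuljan_2} as the base case $m = 2$. Since one implication is immediate, assume $\{B_1,\ldots,B_m\}$ is strongly indefinite with $m \geq 3$ and $A \geq 0$ on $\bigcap_{i=1}^m Q_i$. Writing $\mc{K} := \bigcap_{i=1}^{m-1} Q_i$, I would first produce a single scalar $\lambda_m \in \RR$ such that $A + \lambda_m B_m \geq 0$ on $\mc{K}$, and then apply the inductive hypothesis to the selfadjoint operator $A + \lambda_m B_m$ and the family $\{B_1,\ldots,B_{m-1}\}$ (which is strongly indefinite by the Remark after Definition \ref{strongly indef}) to obtain the remaining scalars $\lambda_1,\ldots,\lambda_{m-1}$.

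Before attempting the construction of $\lambda_m$, I would check that $B_m$ is indefinite on $\mc{K}$. If $B_m \geq 0$ on $\mc{K}$ (the case $B_m \leq 0$ is symmetric), the inductive hypothesis applied with $B_m$ in place of $A$ would deliver $\mu_1,\ldots,\mu_{m-1} \in \RR$ with $B_m + \sum_{i<m} \mu_i B_i \geq 0$. This is a nontrivial semidefinite linear combination of $B_1,\ldots,B_m$ (coefficient $1$ on $B_m$), contradicting weakly indefiniteness of the full family; in particular $\mc{K} \neq \{0\}$.

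With $B_m$ indefinite on $\mc{K}$, I would pick arbitrary $y \in \mc{P}_m^- \cap \mc{K}$ and $z \in \mc{P}_m^+ \cap \mc{K}$. Proposition \ref{lema_horrible} (applied with $i = m$) produces $\theta \in [0,\pi)$ such that $[y, \pm e^{i\theta} z] \subseteq \mc{K}$, together with parameters $t_\pm \in (0,1)$ for which $\gamma_\pm(t_\pm) := t_\pm y \pm (1-t_\pm) e^{i\theta} z$ lie in $\bigcap_{i=1}^m Q_i$. Replaying the Dines-type algebra of Theorem \ref{prop_krein_smuljan_2} from $\PI{A\gamma_\pm(t_\pm)}{\gamma_\pm(t_\pm)} \geq 0$ and $\PI{B_m \gamma_\pm(t_\pm)}{\gamma_\pm(t_\pm)} = 0$ should yield
\[
\frac{\PI{Ay}{y}}{\PI{B_m y}{y}} \leq \frac{\PI{Az}{z}}{\PI{B_m z}{z}}
\]
for all such $y, z$. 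Any $\lambda_m$ with $-\lambda_m$ between the corresponding supremum and infimum then makes $A + \lambda_m B_m \geq 0$ on $(\mc{P}_m^- \cup \mc{P}_m^+) \cap \mc{K}$; on $\bigcap_{i=1}^m Q_i \subseteq \mc{K}$ it is $\geq 0$ since $A \geq 0$ there and $B_m$ vanishes in quadratic form. Hence $A + \lambda_m B_m \geq 0$ on all of $\mc{K}$, and the induction closes.

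The step I expect to require the most care is the preparatory one isolating $B_m$ as indefinite on $\mc{K}$: it uses the inductive hypothesis as an obstruction rather than a tool, and it is precisely what lets the strongly indefinite hypothesis propagate through the induction without any extra assumption. Once it is in place, the rest is essentially the two-variable argument of Theorem \ref{prop_krein_smuljan_2}, with Proposition \ref{lema_horrible} supplying the $\theta$ and the interior zeros that Lemma \ref{prop_simetria}'s continuity argument provided there.
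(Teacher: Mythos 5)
Your proposal is correct and follows essentially the same route as the paper's proof: induction on $m$ with Theorem \ref{prop_krein_smuljan_2} as base case, Proposition \ref{lema_horrible} to place $x_\pm$ in $\bigcap_{i=1}^m Q_i$, the Dines-type inequality to isolate $\la_m$, and the inductive hypothesis applied to $A+\la_m B_m$ with $\{B_1,\ldots,B_{m-1}\}$. Your preparatory step showing $B_m$ is indefinite on $\bigcap_{i<m}Q_i$ by using the inductive hypothesis as an obstruction is exactly what the paper does, only phrased directly instead of being routed through Lemma \ref{problemas equiv}.
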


\begin{proof}
The fact that $\Omega\neq\varnothing$ implies $A\geq0$ in $\bigcap_{i=1}^mQ_i$ is trivial. We prove the converse by induction on $m$. The case for $m=2$ operators $B_1,B_2$ follows readily from Theorem
\ref{prop_krein_smuljan_2}.

For the inductive step fix $n\in\NN$, $n\geq3$, and assume the statement holds for $m=n-1$. Now consider
selfadjoint operators $B_1,B_2,\ldots,B_n\in\mc{L}(\HH)$ such that $\{B_1,B_2,\ldots,B_n\}$ is strongly indefinite.

First, let us show that $B_j\not\equiv 0$ in $\bigcap_{i\neq j} Q_i$ for every $j=1,\ldots,n$. Indeed, if there exists $j=1,\ldots,n$ such that $B_j\equiv 0$ in $\bigcap_{i\neq j} Q_i$ then, by inductive hypothesis, there exists $(\la_i)_{i\neq j}\in \RR^{n-1}$ such that $B_j + \sum_{i\neq j} \la_i B_i\geq 0$, which is a contradiction. Then
 $\{B_1,\ldots,B_n\}$ is indefinite (by Remark \ref{problemas equiv}) and, by Proposition \ref{lema_horrible}, $\bigcap_{i=1}^{n}Q_i\neq\{0\}$. 


Now, assume that $A\geq0$ in $\bigcap_{i=1}^{n}Q_i$. Since $B_n$ is indefinite in $\bigcap_{i=1}^{n-1}Q_i$, take $y\in\mc{P}_{n}^-\cap \bigcap_{i=1}^{n-1}Q_i$ and $z\in\mc{P}_{n}^+\cap\bigcap_{i=1}^{n-1}Q_i$. Again, by Proposition \ref{lema_horrible}, there exist $\theta\in[0,\pi)$ and $t_\pm\in(0,1)$ such that
$x_\pm:=t_\pm y\pm(1-t_\pm)e^{i\theta}z\in\bigcap_{i=1}^nQ_i$. Then $\PI{Ax_\pm}{x_\pm}\geq0$.

Following a procedure similar to the one in the proof of Theorem \ref{prop_krein_smuljan_2} we then get that
\[
\frac{\PI{Ay}{y}}{\PI{B_n y}{y}}\leq\frac{\PI{Az}{z}}{\PI{B_nz}{z}},
\]
for arbitrary $y\in\mc{P}_{n}^-\cap \bigcap_{i=1}^{n-1}Q_i$ and $z\in\mc{P}_{n}^+\cap\bigcap_{i=1}^{n-1}Q_i$. Hence, there exists $\la_n\in\RR$ such that
\[
\PI{(A+\la_n B_n)x}{x}\geq0 \quad\text{for every $x\in\big(\mc{P}_n^-\cap \textstyle{\bigcap_{i=1}^{n-1}}Q_i\big)\cup \big(\mc{P}_n^+\cap \textstyle{\bigcap_{i=1}^{n-1}}Q_i\big)$.}
\]
Considering that $\PI{Ax}{x}\geq0$ for every $x\in \bigcap_{i=1}^nQ_i$, we have that
\[
\PI{(A+\la_n B_n)x}{x}\geq0\qquad\text{for every $x\in \textstyle{\bigcap_{i=1}^{n-1}}Q_i$}.
\]
Then, applying  the inductive hypothesis to $A':=A+\la_n B_n$ and the strongly indefinite set $\{B_1,B_2,\ldots,B_{n-1}\}$, there exists $(\la_1,\la_2,\ldots,\la_{n-1})\in\RR^{n-1}$ such that 
\[
(A+\la_n B_n)+\sum_{i=1}^{n-1}\la_iB_i\geq0, 
\]i.e.
$(\la_1,\la_2,\ldots,\la_n)\in\Omega$, completing the proof.
\end{proof}

\begin{cor}\label{strongly implica indef} 
Given selfadjoint operators $B_1,\ldots,B_m\in\mc{L}(\HH)$, if $\{B_1,\ldots,B_m\}$ is strongly indefinite then $\{B_1,\ldots,B_m\}$ is indefinite.
\end{cor}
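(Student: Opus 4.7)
The plan is to argue by contradiction, using the just-proven Theorem \ref{prop_ks_strongly} applied to a proper subfamily of $\{B_1,\ldots,B_m\}$. Since strong indefiniteness already contains weak indefiniteness as a hypothesis, the substance of the corollary is to upgrade the semidefinite-combination condition to the pointwise indefiniteness of each $B_j$ on $\bigcap_{i\neq j}Q_i$, and the natural way to force this is to feed the offending $B_j$ into the generalized Krein-\v{S}mul'jan theorem as the operator $A$.

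The case $m=2$ is immediate: strong indefiniteness forces weak indefiniteness, and Lemma \ref{prop_simetria} says that for pairs weak indefiniteness is the same as indefiniteness in the sense of Definition \ref{very indef}. So I may assume $m\geq 3$. Suppose, toward a contradiction, that some $B_j$ fails to be indefinite on $\bigcap_{i\neq j}Q_i$, i.e.\ is semidefinite there. Replacing $B_j$ by $-B_j$ if necessary --- an operation that preserves weak (hence strong) indefiniteness of the family --- I may assume $B_j\geq 0$ in $\bigcap_{i\neq j}Q_i$. By the Remark following Definition \ref{strongly indef}, the subfamily $\{B_i\}_{i\neq j}$ is again strongly indefinite, and it has $m-1\geq 2$ elements, so Theorem \ref{prop_ks_strongly} applies to it with $A=B_j$. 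The conclusion is the existence of scalars $(\la_i)_{i\neq j}\in\RR^{m-1}$ with
\[
B_j+\sum_{i\neq j}\la_i B_i\geq 0.
\]
Since the $B_j$-coefficient equals $1$, this is a nontrivial real linear combination producing a positive semidefinite operator, contradicting the weak indefiniteness of $\{B_1,\ldots,B_m\}$.

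The only real point to verify --- and the only potential obstacle --- is that no circularity is incurred. The proof of Theorem \ref{prop_ks_strongly} proceeds by induction on $m$ and at one point argues that the family under consideration is indefinite, but it does so by invoking its own inductive hypothesis (the lower-$m$ case of Theorem \ref{prop_ks_strongly}) rather than Corollary \ref{strongly implica indef}. Hence applying Theorem \ref{prop_ks_strongly} to the strictly smaller subfamily $\{B_i\}_{i\neq j}$ is legitimate, and beyond this bookkeeping check the argument is a one-step contrapositive with no technical content.
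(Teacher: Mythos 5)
Your argument is correct and is essentially the paper's own proof: assume some $B_j$ is semidefinite on $\bigcap_{i\neq j}Q_i$ (reducing to $B_j\geq 0$), note the subfamily $\{B_i\}_{i\neq j}$ is still strongly indefinite, apply Theorem \ref{prop_ks_strongly} with $A=B_j$, and contradict weak indefiniteness. Your extra bookkeeping (the separate $m=2$ case via Lemma \ref{prop_simetria}, the explicit sign flip, and the check that Theorem \ref{prop_ks_strongly} does not rely on this corollary) is sound and only makes explicit what the paper leaves implicit.
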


\begin{proof}
Suppose that $\{B_1,\ldots,B_m\}$ is strongly indefinite and there exists $i=1,\ldots,m$ such that $B_i$ is definite in $\bigcap_{j\neq i} Q_j$. Let us assume that $B_i\geq 0$ in $\bigcap_{j\neq i} Q_j$. Note that $\{B_1,\ldots,B_{i-1},B_{i+1},\ldots,B_m\}$ is also strongly indefinite and, by Theorem \ref{prop_ks_strongly}, there exists $(\la_j)_{j\neq i}\in\RR^{m-1}$ such that $B_i + \sum_{j\neq i} \la_jB_j \geq 0$, which is a contradiction to $\{B_1,\ldots,B_m\}$ being weakly indefinite.
\end{proof}

\begin{cor}
Given selfadjoint operators $B_1,\ldots,B_m\in\mc{L}(\HH)$, the following conditions are equivalent:
\begin{enumerate}[label=\roman*)]
\item $\{B_1,\ldots,B_m\}$ is strongly indefinite.
\item \begin{enumerate}
\item $\{B_1,\ldots,B_m\}$ is indefinite;
\item if $x_\pm\in \mc{P}_i^\pm\cap Q_j\cap Q_k$ then there exists $\theta\in[0,\pi)$ such that 
\[
\real\PI{B_j x_+}{e^{i\theta}x_-}=\real\PI{B_k x_+}{e^{i\theta}x_-}=0.
\]
\end{enumerate}
\item \begin{enumerate}
\item for each $i=1,\ldots,m$ there exists $x_i \in \textstyle{\bigcap_{j=1}^m}Q_j\setminus N(\sum_{j=1}^m \mu_j B_j)$ for every choice of $(\mu_1,\ldots,,\mu_m)\in\RR^{m}$ with $\mu_i\neq 0$;
\item  if $x_\pm\in\mc{P}_i^\pm\cap Q_j\cap Q_k$ then there exist $\theta\in[0,\pi)$ and $x_\theta\in(x_-,e^{i\theta}x_+)$ such that
\[
x_\theta\in Q_i\cap Q_j\cap Q_k.
\]
\end{enumerate}
\end{enumerate}
\end{cor}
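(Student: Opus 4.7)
The plan is to prove the equivalences by establishing $(i)\Leftrightarrow(ii)$ and $(i)\Leftrightarrow(iii)$. For $(i)\Leftrightarrow(ii)$, both directions compile earlier results: if $\{B_1,\ldots,B_m\}$ is strongly indefinite then $(a)$ is Corollary \ref{strongly implica indef}, and $(b)$ is the alternative reformulation ii') of Definition \ref{strongly indef}(ii) derived just after that definition. Conversely, Lemma \ref{muy indef implica indef} promotes indefiniteness to weak indefiniteness, and $(b)$ is again ii'), so both clauses of Definition \ref{strongly indef} hold.

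For $(i)\Rightarrow(iii)$, clause $(a)$ follows from Proposition \ref{lema_horrible}: strong indefiniteness implies indefiniteness by Corollary \ref{strongly implica indef}, so for each $i$ there exist $x_\pm\in\mc{P}_i^\pm\cap\bigcap_{j\neq i}Q_j$, and Proposition \ref{lema_horrible} then produces $y_i^+\in\bigcap_j Q_j$ with $\bigl(\sum_j\mu_j B_j\bigr)y_i^+\neq 0$ for every $\mu\in\RR^m$ with $\mu_i\neq 0$. For clause $(b)$, given $x_\pm\in\mc{P}_i^\pm\cap Q_j\cap Q_k$, I would invoke Definition \ref{strongly indef}(ii) to select $\theta\in[0,\pi)$ making $B_j$ and $B_k$ definite on the plane $\{\alpha x_-+\beta e^{i\theta}x_+:\alpha,\beta\in\RR\}$; since $x_\pm\in Q_j\cap Q_k$, this forces the entire segment $[x_-, e^{i\theta}x_+]$ to lie in $Q_j\cap Q_k$. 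Applying the intermediate value theorem to the continuous map $\phi(t) = \PI{B_i\bigl(tx_-+(1-t)e^{i\theta}x_+\bigr)}{tx_-+(1-t)e^{i\theta}x_+}$, which satisfies $\phi(0) = \PI{B_i x_+}{x_+} > 0$ and $\phi(1) = \PI{B_i x_-}{x_-} < 0$, then produces some $t_0\in(0,1)$ with $\phi(t_0) = 0$, yielding $x_\theta = t_0 x_- + (1-t_0) e^{i\theta}x_+ \in Q_i\cap Q_j\cap Q_k$.

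The reverse direction $(iii)\Rightarrow(i)$ needs slightly more work. Condition $(b)$ directly yields Definition \ref{strongly indef}(ii): writing $x_\theta = tx_- + (1-t)e^{i\theta}x_+$ with $t\in(0,1)$ and expanding $\PI{B_j x_\theta}{x_\theta}=0$ using $x_\pm\in Q_j$ collapses to $2t(1-t)\real\PI{B_j x_-}{e^{i\theta}x_+} = 0$, so the cross term vanishes and $B_j$ is identically zero on the plane $\{\alpha x_-+\beta e^{i\theta}x_+\}$; the same argument handles $B_k$. For weak indefiniteness I would argue by contradiction: suppose $\sum_j\mu_j B_j\geq 0$ for some $\mu\neq 0$ and fix $i$ with $\mu_i\neq 0$. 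The vector $x_i$ supplied by $(iii)(a)$ belongs to $\bigcap_j Q_j$, so $\PI{(\sum_j\mu_j B_j)x_i}{x_i} = 0$; the standard fact that a positive semidefinite operator $T$ satisfies $\PI{Tx}{x}=0\Rightarrow Tx=0$ then forces $\bigl(\sum_j\mu_j B_j\bigr)x_i = 0$, contradicting $(iii)(a)$. The case $\sum_j\mu_j B_j\leq 0$ is symmetric. Apart from the bookkeeping around the multiple equivalent formulations of Definition \ref{strongly indef}(ii), the key observation driving this direction is this semidefinite-forcing step, which upgrades the single-vector non-degeneracy condition in $(iii)(a)$ into a weak indefiniteness contradiction.
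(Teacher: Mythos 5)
Your proposal is correct and relies on essentially the same ingredients as the paper's proof: Corollary \ref{strongly implica indef}, the reformulation ii') of Definition \ref{strongly indef}, Proposition \ref{lema_horrible} (whose segment/intermediate-value argument you rederive directly for clause iii)(b)), and the criterion that $Q(B)\setminus N(B)\neq\{0\}$ forces indefiniteness, which your semidefinite-forcing step simply reproves. The only difference is organizational: you establish $i)\Leftrightarrow ii)$ and $i)\Leftrightarrow iii)$ separately (using Lemma \ref{muy indef implica indef} for $ii)\Rightarrow i)$), whereas the paper runs the cycle $i)\Rightarrow ii)\Rightarrow iii)\Rightarrow i)$.
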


\begin{proof}
\noi {\it i)$\to$ii)}\  If $\{B_1,\ldots,B_m\}$ is strongly indefinite then, by Corollary \ref{strongly implica indef}, the set $\{B_1,\ldots,B_m\}$ is indefinite. 
We have already mentioned that {\it(b)} is equivalent to the second condition in Definition \ref{strongly indef}.

%

\smallskip

\noi {\it ii)$\to$iii)}\  Item $(a)$ follows from the fact that $\{B_1,\ldots,B_m\}$ is indefinite and Proposition \ref{lema_horrible}. Fix $i,j,k\in\{1,2,\ldots,m\}$ and take $x_\pm\in\mc{P}_i^\pm\cap Q_j\cap Q_k$.
Since $\{B_1,\ldots,B_m\}$ is indefinite, $\{B_i,B_j,B_k\}$ is also indefinite, and the result follows from Proposition \ref{lema_horrible}.

\smallskip

\noi {\it iii)$\to$i)}\  To see that $\{B_1,\ldots,B_m\}$ is weakly indefinite, consider $(\mu_1,\ldots,\mu_m)\in\RR^m$ 
and suppose that $\mu_i\neq 0$ for some $i=1,\ldots,m$. Then, by $(a)$, there exists $x_i\in \textstyle{\bigcap_{j=1}^m}Q_j\setminus N(\sum_{j=1}^m \mu_j B_j)$. Hence, $x_i\in Q(\sum_{j=1}^m \mu_j B_j)\setminus N(\sum_{j=1}^m \mu_j B_j)$, which implies that $\sum_{j=1}^m \mu_j B_j$ is indefinite. Since $(\mu_1,\ldots,\mu_m)\in\RR^m\setminus\{0\}$ was arbitrary, we have that $\{B_1,\ldots,B_m\}$ is weakly indefinite.

Fix $i,j,k\in\{1,2,\ldots,m\}$ and take $x_\pm\in\mc{P}_i^\pm\cap Q_j\cap Q_k$. By $iii)$ there exist $\theta\in[0,\pi)$
and $t_0\in(0,1)$ such that $x_\theta:=t_0x_-+(1-t_0)e^{i\theta}x_+\in Q_i\cap Q_k \cap Q_k$. This implies that
$\real\PI{B_jx_-}{e^{i\theta}x_+}=\real\PI{B_kx_-}{e^{i\theta}x_+}=0$, which in turn implies that $B_j$ and $B_k$ are definite in $\{\alpha x_-+\beta e^{i\theta}x_+:\alpha,\beta\in\RR\}$.
\end{proof}

\section{A sufficient condition for strongly indefiniteness} \label{Hes-McShane}

Given a set of selfadjoint operators $\{B_1,\ldots,B_m\}$ in $\mc{L}(\HH)$, we now present a sufficient condition to guarantee that it is a strongly indefinite set. It is inspired by previous works by Hestenes and McShane for the (real) finite dimensional case. Given symmetric matrices $A,B_1,\ldots,B_m\in\RR^{n\x n}$, assume that $\{B_1,\ldots,B_m\}$ is weakly indefinite. In \cite{Hestenes} the authors included the following additional condition: for every subspace $L$ of $\RR^n$ such that $L\cap\left(\bigcap_{i=1}^m Q_i\right)=\{0\}$ there exists $(\mu_1,\ldots,\mu_m)\in\RR^m\setminus\{0\}$ such that $\sum_{i=1}^m\mu_i B_i$ is positive definite in the subspace $L$. Under these assumptions they showed that, if $A$ is positive definite in $\bigcap_{i=1}^m Q_i$ then there exists $(\la_1,\ldots,\la_m)\in\RR^m$ such that $A+ \sum_{i=1}^m \la_i B_i$ is positive definite.

\medskip

\begin{hyp}[\bf{HM}]
Given $m\geq3$ and $B_1,\ldots,B_m\in\mc{L}(\HH)$, assume that the set
$\{B_1,\ldots,B_m\}$ is weakly indefinite. Assume also that if $\mc{S}$ is a real subspace of $\HH$ with $\dim\mc{S}=2$ and $\{i,j,k\}\subset \{1,\ldots,m\}$ is a trio such that
\[
\mc{S}\cap(Q_i\cap Q_j\cap Q_k)=\{0\} 
\]
then there exists $(\la_i,\la_j,\la_k)\in\RR^3\setminus\{0\}$ such that $\la_iB_i+\la_jB_j+\la_kB_k\geq 0$ in $\mc{S}$ and $\la_iB_i+\la_jB_j+\la_kB_k\neq 0$ in $\mc{S}$.
\end{hyp}

If the set $\{B_1,\ldots,B_m\}$ satisfies Hypotheses (HM) then it is immediate that $\{B_i\}_{i\in\mc{F}}$ also satisfies Hypotheses (HM)  for every  $\mc{F}\subseteq\{1,\ldots,m\}$ with $|\mc{F}|\geq 3$.

\begin{prop}\label{prop_hestenes_2} 
Given $m\geq3$ and $B_1,\ldots,B_m\in\mc{L}(\HH)$, assume that $\{B_1,\ldots,B_m\}$ satisfies Hypotheses (HM).
Then, $\{B_1,\ldots,B_m\}$ is a strongly indefinite set.
\end{prop}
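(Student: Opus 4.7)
The plan is to verify the two defining conditions of strongly indefiniteness in turn. Condition (i), weakly indefinite, is part of Hypotheses (HM), so nothing needs to be done there. The real work is condition (ii): for any $i,j,k\in\{1,\ldots,m\}$ and any $x_\pm\in\mc{P}_i^\pm\cap Q_j\cap Q_k$, I need to produce $\theta\in[0,\pi)$ so that $B_j$ and $B_k$ are definite on $\mc{S}_\theta:=\{\alpha x_-+\beta e^{i\theta}x_+:\alpha,\beta\in\RR\}$. If $i\in\{j,k\}$ the hypothesis set is empty, and if $j=k$ only a single off-diagonal entry must vanish, which is elementary trigonometry; so I would focus on the case when $i,j,k$ are pairwise distinct.

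First I would observe that $\{x_-,e^{i\theta}x_+\}$ is $\RR$-linearly independent for every $\theta$, since otherwise $x_+$ would be a complex scalar multiple of $x_-$ and the quadratic values $\PI{B_ix_\pm}{x_\pm}$ would share sign, contradicting $x_\pm\in\mc{P}_i^\pm$. So $\mc{S}_\theta$ is a genuine real $2$-plane, and Hypotheses (HM) can potentially be applied with the trio $\{i,j,k\}$. In the basis $\{x_-,e^{i\theta}x_+\}$ the quadratic form $B_l|_{\mc{S}_\theta}$ is represented by the symmetric $2\times 2$ real matrix with diagonal $(\PI{B_lx_-}{x_-},\PI{B_lx_+}{x_+})$ and off-diagonal $c_l(\theta):=\real\PI{B_lx_-}{e^{i\theta}x_+}$; crucially, this diagonal vanishes for $l=j,k$ and has strictly opposite signs for $l=i$.

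I would then argue by contradiction. Suppose that for every $\theta\in[0,\pi)$ at least one of $c_j(\theta),c_k(\theta)$ is nonzero, and fix such a $\theta$. The zero-diagonal matrices for $B_j,B_k$ together with this non-vanishing force any $v=\alpha x_-+\beta e^{i\theta}x_+\in Q_j\cap Q_k$ to satisfy $\alpha\beta=0$; combined with $\PI{B_ix_\pm}{x_\pm}\neq 0$ this yields $\mc{S}_\theta\cap Q_i\cap Q_j\cap Q_k=\{0\}$. Hypotheses (HM) then furnishes $(\la_i,\la_j,\la_k)\neq 0$ so that $C:=\la_iB_i+\la_jB_j+\la_kB_k$ satisfies $C|_{\mc{S}_\theta}\geq 0$ and $C|_{\mc{S}_\theta}\not\equiv 0$.

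The final step is to show that such a $C$ cannot exist. The diagonal of $C|_{\mc{S}_\theta}$ is $(\la_i\PI{B_ix_-}{x_-},\la_i\PI{B_ix_+}{x_+})$, so the strict opposite signs of $\PI{B_ix_\pm}{x_\pm}$ combined with positive semidefiniteness force $\la_i=0$. The restriction then reduces to a symmetric $2\times 2$ matrix with zero diagonal and off-diagonal $\la_jc_j(\theta)+\la_kc_k(\theta)$, which is positive semidefinite only when this off-diagonal also vanishes, yielding $C|_{\mc{S}_\theta}=0$ and contradicting the non-triviality clause of (HM). I expect this final chain --- extracting $\la_i=0$ from the opposite signs and then annihilating the off-diagonal entry --- to be the pivotal step where the sign data of $\mc{P}_i^\pm\cap Q_j\cap Q_k$ is fed into (HM), but it amounts to careful linear-algebra bookkeeping rather than a conceptual leap.
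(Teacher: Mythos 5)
Your proof is correct, and its engine is the same as the paper's: apply Hypotheses (HM) to the real two-plane spanned by $x_-$ and $e^{i\theta}x_+$, use the opposite signs of $\PI{B_ix_\pm}{x_\pm}$ to force $\la_i=0$, and then use positive semidefiniteness of a quadratic form with zero diagonal to annihilate the off-diagonal entry, contradicting the clause in (HM) that the combination is nonzero on $\mc{S}$. The organization differs in two respects. First, the paper argues by induction on $m$, with the trio argument appearing as the base case $m=3$ and the inductive step reducing back to trios; you observe, correctly, that item ii) of Definition \ref{strongly indef} is purely a statement about trios $\{B_i,B_j,B_k\}$ while item i) is literally part of (HM), so no induction is needed --- a genuine streamlining. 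Second, the paper first chooses $\theta$ so that $\real\PI{B_jx_+}{e^{i\theta}x_-}=0$ (hence $\mc{S}\subseteq Q_j$) and then extracts $\real\PI{B_kx_+}{e^{i\theta}x_-}=0$ from (HM), whereas you run a contradiction at an arbitrary $\theta$ for which at least one off-diagonal entry survives; this gives the slightly stronger (and harmless) conclusion that both off-diagonal entries vanish for every $\theta$. Your explicit handling of the degenerate cases --- vacuousness when $i\in\{j,k\}$, elementary choice of $\theta$ when $j=k$, and the $\RR$-linear independence of $\{x_-,e^{i\theta}x_+\}$ guaranteeing $\dim\mc{S}_\theta=2$ as (HM) requires --- is, if anything, a bit more careful than the paper's write-up.
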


\begin{proof}
We prove the result by induction on $m$. First, assume that $m=3$. By Corollary \ref{prop_simetria_2},  $\{B_1,B_2,B_3\}$ is indefinite. Fix $i\in\{1,2,3\}$ and consider $x_\pm\in\mc{P}_i^\pm\cap Q_j\cap Q_k$. On the one hand, if $k=j$ then $x_\pm\in\mc{P}_i^\pm\cap Q_j$ and we can choose $\theta\in[0,\pi)$ such that $\real\PI{B_jx_+}{e^{i\theta}x_-}=0$. Hence, in this case we have that $B_j=B_k$ is zero in $\St=\{\alpha x_+ + \beta e^{i\theta}x_-: \ \alpha,\beta\in\RR\}$.


On the other hand, if $k\neq j$ choose $\theta\in[0,\pi)$ such that $\real\PI{B_j x_+}{e^{i\theta}x_-}=0$ and consider the real subspace 
$$\mc{S}=\{\alpha x_+ + e^{i\theta}\beta x_-\,:\,\alpha,\beta\in\RR\}\subseteq Q_j.$$ 
If $\mc{S}\cap Q_i\cap Q_j\cap Q_k=\set{0}$, then there exist $\la_i,\la_j,\la_k\in\RR$ such that $B:=\la_iB_i+\la_jB_j+\la_kB_k\geq0$ (and non zero) in $\mc{S}$.
Thus,
\[
0\leq\PI{Bx_}{x_-}=\la_i\PI{B_i x_-}{x_-}\qquad\text{and}\qquad0\leq\PI{Bx_+}{x_+}=\la_i\PI{B_i x_+}{x_+}.
\]
But $\PI{B_i x_-}{x_-}<0$ and $\PI{B_i x_+}{x_+}>0$ implies that $\la_i=0$. Since $B_j\big|_\mc{S}=0$ we get that $B\big|_{\mc{S}}=\la_kB_k\big|_{\mc{S}}$. Then
\begin{align*}
0\leq &\PI{B(x_+ +e^{i\theta}x_-)}{x_+ +e^{i\theta}x_-}=2\la_k\real\PI{B_k x_+}{e^{i\theta}x_-}, 
\\
0\leq &\PI{B(x_+ -e^{i\theta}x_-)}{x_+ -e^{i\theta}x_-}=-2\la_k\real\PI{B_k x_+}{e^{i\theta}x_-},
\end{align*}
and consequently either $\la_k=0$ or $\real\PI{B_k x_+}{e^{i\theta}x_-}=0$.
But if $\la_k=0$ then $B\big|_\mc{S}=0$, leading to a contradiction. Hence, $\real\PI{B_k x_+}{e^{i\theta} x_-}=0$ and condition \rm{ii')} is verified. Therefore, $\{B_1,B_2,B_3\}$ is strongly indefinite.

For the inductive step fix $n\in\NN$, $n\geq4$, and assume the statement holds for $m=n-1$ operators. Now consider
$B_1,\ldots,B_n\in\mc{L}(\HH)$ satisfying the hypotheses.

Hence, by inductive hypothesis, $\{B_1,\ldots,B_{k-1},B_{k+1},\ldots,B_n\}$ is strongly indefinite for $k=1,\ldots,n$. Then, by Remark \ref{problemas equiv}, $\{B_1,\ldots,B_n\}$ is indefinite.

Now take three different indices $i,j,k\in\{1,\ldots,n\}$. If none of them is equal to $n$, by inductive hypothesis, item \rm{ii)} in the definition of strongly indefiniteness is satisfied. Assume that $k=n$ and take
$y\in\mc{P}_{n}^-\cap \bigcap_{l=1}^{n-1}Q_l$, $z\in\mc{P}_{n}^+\cap\bigcap_{l=1}^{n-1}Q_l$. Then, choose $\theta\in[0,\pi)$ such that $\real\PI{B_iy}{e^{i\theta}z}=0$ and consider the real subspace $\mc{S}=\{\alpha y+e^{i\theta}\beta z\,:\,\alpha,\beta\in\RR\}\subseteq Q_i$. 
If $\mc{S}\cap Q_i\cap Q_j\cap Q_n=\set{0}$ then, following the same procedure as in the previous step,
$\real\PI{B_jy}{e^{i\theta} z}=0$ and condition \rm{ii')} is verified. 
Therefore, $\{B_1,\ldots,B_n\}$ is strongly indefinite.
\end{proof}

\begin{cor}
Given $B_1,\ldots,B_m\in\mc{L}(\HH)$ assume that
$\{B_1,\ldots,B_m\}$ satisfies Hypotheses (HM). If $A\in\mc{L}(\HH)$ is selfadjoint, then
\[
A\geq0\quad\text{in}\quad\textstyle{\bigcap_{i=1}^m}Q_i\qquad\text{if and only if}\qquad\Omega\neq\varnothing.
\]
\end{cor}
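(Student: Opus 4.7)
The plan is to show that this corollary follows immediately by chaining the two main results established in the preceding two sections. The Hypotheses (HM) were tailored precisely so that Proposition \ref{prop_hestenes_2} converts them into the strongly indefinite condition, and Theorem \ref{prop_ks_strongly} converts strongly indefiniteness into the desired feasibility characterization. So there is essentially no new mathematics to produce: the role of this corollary is to package these two implications together.

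More explicitly, first I would invoke Proposition \ref{prop_hestenes_2}, which states that any family $\{B_1,\ldots,B_m\}$ satisfying Hypotheses (HM) (where $m\geq 3$) is strongly indefinite. This takes care of the main structural hypothesis. The forward direction $\Omega\neq\varnothing \Rightarrow A\geq 0$ on $\bigcap_{i=1}^m Q_i$ is trivial: if $A + \sum_i \la_i B_i \geq 0$ on $\HH$ and $x\in\bigcap_i Q_i$, then $\PI{Ax}{x}=\PI{(A+\sum_i\la_iB_i)x}{x}\geq 0$, since the contribution of each $B_i$ vanishes.

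For the converse, having established strongly indefiniteness, I would invoke Theorem \ref{prop_ks_strongly} directly with $A$ and the strongly indefinite family $\{B_1,\ldots,B_m\}$ to conclude that $A\geq 0$ on $\bigcap_{i=1}^m Q_i$ implies $\Omega\neq\varnothing$. This completes the proof.

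There is essentially no obstacle here, since the corollary is a two-line syllogism. The only minor point to watch is that Hypotheses (HM) is formulated for $m\geq 3$, so if one wishes to cover $m=1$ and $m=2$ as well, one should fall back on Theorem \ref{teo_krein_smuljan_azizov} (for $m=1$, using that any weakly indefinite singleton $\{B_1\}$ means $B_1$ is indefinite) and Theorem \ref{prop_krein_smuljan_2} (for $m=2$, since any weakly indefinite pair is already strongly indefinite by the remark following Proposition \ref{lema_horrible}). In either small-dimensional case, Hypotheses (HM) trivially reduces to weakly indefiniteness and the corresponding Krein–Šmul'jan-type theorem closes the argument.
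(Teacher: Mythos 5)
Your proof is correct and matches the paper's intent exactly: the corollary is stated there as an immediate consequence of Proposition \ref{prop_hestenes_2} (Hypotheses (HM) implies strongly indefinite) combined with Theorem \ref{prop_ks_strongly}, with the forward direction being trivial. Your extra remark on $m=1,2$ is harmless but unnecessary, since (HM) is only formulated for $m\geq 3$.
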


\section*{Acknowledgements}

The authors gratefully acknowledge the support of CONICET through the grant PIP 11220200102127CO. F.~Mart\'{\i}nez Per\'{\i}a also acknowledges the support from UNLP 11X974. This research was partially supported by the Air Force Office of Scientific Research (USA) grant FA9550-24-1-0433.

\end{document}